\numberwithin{equation}{section}
\newtheorem{theorem}{Theorem}[section]
\newtheorem{proposition}[theorem]{Proposition}
\newtheorem{corollary}[theorem]{Corollary}
\newtheorem{lemma}[theorem]{Lemma}
\theoremstyle{definition}
\newtheorem{remark}[theorem]{Remark}
\newtheorem{example}[theorem]{Example}
\newtheorem{definition}[theorem]{Definition}
\def\ZZ{\mathbb{Z}}
\def\QQ{\mathbb{Q}}
\def\Fcal{\mathcal{F}}
\renewcommand{\eqref}[1]{{\rm (\ref{#1})}}
\begin{document}

\title[A  quantum analogue of generalized cluster algebras]
{A quantum analogue of generalized cluster algebras}

\author{Liqian Bai, Xueqing Chen, Ming Ding and Fan Xu}
\address{Department of Applied Mathematics, Northwestern Polytechnical University, Xi'an, Shaanxi 710072, P.R. China}
\email{bailiqian@nwpu.edu.cn (L.Bai)}
\address{Department of Mathematics,
 University of Wisconsin-Whitewater\\
800 W.Main Street, Whitewater,WI.53190.USA}
\email{chenx@uww.edu (X.Chen)}
\address{School of Mathematical Sciences and LPMC,
Nankai University, Tianjin, P.R.China}
\email{m-ding04@mails.tsinghua.edu.cn (M.Ding)}
\address{Department of Mathematical Sciences\\
Tsinghua University\\
Beijing 100084, P.~R.~China} \email{fanxu@mail.tsinghua.edu.cn
(F.Xu)}


\thanks{Ming Ding was supported by NSF of China (No. 11301282) and Specialized Research Fund for the Doctoral Program of Higher Education (No. 20130031120004) and Fan Xu was supported by NSF of China (No. 11071133).}

\subjclass[2000]{Primary  16G20, 17B67; Secondary  17B35, 18E30}


\keywords{generalized cluster algebra, quantum generalized cluster algebra, Laurent phenomenon, standard monomial}

\maketitle

\begin{abstract}
We define a quantum analogue of a class of generalized cluster algebras which can be viewed as a generalization of
quantum cluster algebras defined in \cite{berzel}. In the case of rank two, we  extend some structural results from the classical theory of generalized cluster algebras obtained in \cite{CS}\cite{rupel} to the quantum case.
\end{abstract}


\section{Background}

Fomin and Zelevinsky invented cluster algebras \cite{ca1}\cite{ca2} in order
to study total positivity in algebraic groups and canonical bases in quantum groups. As a natural generalization,
Chekhov and Shapiro introduced the generalized cluster algebras in \cite{CS}.
The main difference between cluster algebras and generalized cluster algebras is that the  binomial exchange relation for cluster
variables of  cluster algebras is replaced by the polynomial with arbitrary positive degree for those cluster
variables of generalized cluster algebras. In \cite{CS}, it was shown that the generalized cluster algebras have the Laurent phenomenon, which is regarded as the most important feature of the classical cluster algebras. Some other important properties of cluster algebras also hold true in the generalized cluster algebras \cite{CS}\cite{nak}\cite{rupel}.

In order to study the canonical basis in an algebraic framework, Berenstein and Zelevinsky \cite{berzel} introduced  quantum cluster algebras as a noncommutative analogue of cluster algebras in which the cluster variables are  quantized. The other quantum deformation of  cluster algebras given  by Fock and Goncharov \cite{fg1}\cite{fg2} is realized through
quantizing the coefficients.
Recently, Nakanishi extends the notion of the quantization of the coefficients of
the classical cluster algebras to the generalized cluster algebras, and shows that it is tightly integrated with
the quantum dilogarithms of higher degrees \cite{nak0}. Motivated by these, a natural problem is that what is the notion of
the quantization of the cluster variables in generalized cluster algebras, and do the essentially important properties of (quantum) cluster algebras still hold true in these cases?

The aim of this note is to propose the notion of generalized quantum cluster algebras, which can be considered as a quantum analogue of  a class of generalized cluster algebras  introduced in \cite{CS} and  can be also viewed as a generalization of quantum cluster algebras introduced in \cite{berzel}. In the case of rank two, we  prove some structural results  of these generalized quantum cluster algebras such as the Laurent phenomenon. The method is to use the tools developed in  \cite{berzel}\cite{rupel}. However, we do not know whether the
positivity property does hold or not. Note that in ~ \cite{usn}, Usnich proved the Laurent phenomenon in  the noncommutative analogue for  only two variables with monic and palindromic polynomials $P_1(x)=P_2(x)$.

\section{Generalized cluster algebras of geometric type}

Firstly, we recall the definition of generalized cluster algebras (see \cite{CS} and \cite{gsv16}). Let $m$ and $n$ be two positive integers with $m\geq n$. Suppose that $\widetilde{B}=(b_{ij})_{m\times n}=\begin{bmatrix}B\\C
\end{bmatrix}$ be an $m\times n$ integer matrix whose principal
part $B$ is a $n\times n$ skew-symmetrizable matrix. For each $k\in\{1,\ldots,n\}$, there is a positive integer $d_k$ such that $d_k|b_{jk}$ for all $1\leq j\leq n$. We set
\begin{equation}\label{def of d}
\textbf{d}=(d_1,\ldots,d_n)
\end{equation}
and
\begin{equation}\label{def of beta}
\beta_{ij}=\begin{cases}
                   \displaystyle\frac{b_{ij}}{d_j} & \text{ if $1\leq j\leq n$, }\\   \\
                   \displaystyle\lfloor\frac{b_{ij}}{d_j}\rfloor & \text{ if $n+1\leq j\leq m$.}
                  \end{cases}
\end{equation}
Here $\lfloor x\rfloor$ denotes the largest integer $l$ that is less than or equal to $x$.

We denote $[b_{ij}]_+=b_{ij}$ if $b_{ij}\geq0$, and $[b_{ij}]_+=0$ if $b_{ij}\leq 0$.
Let $\Fcal:=\QQ(x_1,\ldots,x_m)$ be a field of rational functions in $m$ independent variables. The generalized cluster algebra which will be introduced is a subring of $\Fcal$.

Let $\mathbb{P}=\mathbb{Z}[x_{n+1},\cdots,x_{m}]$. For each $1\leq i\leq n$, the $i$-th string $\rho_i$ is a collection of monomials $\rho_{i,r}\in\mathbb{P}$, $0\leq r\leq d_i$ satisfying that $\rho_{i,0}=\rho_{i,d_i}=1.$
\begin{definition}\label{generalized seed}
A generalized seed in $\Fcal$  is a triple $(\widetilde{\textbf{x}},\rho,\widetilde{B})$, where
\begin{enumerate}
  \item[(1)] $\widetilde{\textbf{x}}=\{x_1,\cdots,x_{m}\}$ is a transcendence basis of $\Fcal$, called a extended cluster, and the tuple $\textbf{x}=\{x_1,\cdots,x_{n}\}$ called a cluster whose elements are called cluster variables;
  \item[(2)] the matrices $B$  and $\widetilde{B}$ defined above are called the exchange matrix and the extended exchange
matrix, respectively;
  \item[(3)] a coefficient tuple is a  $n$-tuple of strings $\rho=(\rho_1,\cdots,\rho_n)$, where the $i$-th string $\rho_i$ is defined above for each $1\leq i\leq n$.
  \end{enumerate}
\end{definition}

As an easy consequence of the definition of the matrix mutation, we obtain that  $d_k|b'_{jk}$ for all $1\leq j\leq n$. We now turn to the definition of the seed mutation.

\begin{definition}\label{mutation}
(Seed mutation) Let $(\widetilde{\textbf{x}},\rho,\widetilde{B})$ be a generalized seed in $\Fcal$. For each $1\leq k\leq n$, the mutation of $(\widetilde{\textbf{x}},\rho,\widetilde{B})$ in direction $k$ is another generalized seed $\mu_k(\widetilde{\textbf{x}},\rho,\widetilde{B}):=(\widetilde{\textbf{x}}^{\prime},\rho^{\prime},\widetilde{B}^{\prime})$, where
\begin{enumerate}
  \item[(1)] the entries of the matrix $\widetilde{B}^{\prime}=(b^{\prime}_{ij})$ are given by
  \begin{equation*}
   b^{\prime}_{ij}=\begin{cases}
                   -b_{ij} & \text{ if $i=k$ or $j=k$,}\\   \\
                   b_{ij}+\frac{1}{2}(|b_{ik}|b_{jk}+b_{ik}|b_{kj}|) & \text{otherwise;}
                  \end{cases}
  \end{equation*}
  \item[(2)] $\widetilde{\textbf{x}}^{\prime}=(\widetilde{\textbf{x}}\setminus\{x_k\}) \cup x_{k}^{\prime}$, where the new variables $x_{k}^{\prime}\in\Fcal$ is given as follows
  \begin{equation*}
   x^{\prime}_{i}=\begin{cases}
                   x_{i} & \text{ if $i\neq k$,}\\   \\
                   x_{k}^{-1}(\sum\limits_{s=0}^{d_k}\rho_{k,s}\prod\limits_{j=1}^{m} x_{j}^{\beta_{jk}s+[-b_{jk}]_{+}}) & \text{otherwise;}
                  \end{cases}
  \end{equation*}
  \item[(3)] $\rho^{\prime}=(\rho^{\prime}_1,\cdots,\rho^{\prime}_n)$, where in each $i$-th string $\rho^{\prime}_i$, $1\leq i\leq n$,  the components are given by
   \begin{equation*}
   \rho^{\prime}_{i,s}=\begin{cases}
                   \rho_{i,d_i-s} & \text{ if $i=k$,}\\   \\
                   \rho_{i,s} & \text{ otherwise. }
                  \end{cases}
  \end{equation*}
\end{enumerate}
\end{definition}

Note that for each $k$, $1\leq k\leq n$, the seed mutation $\mu_k$  is an involution, i.e,
$\mu_k(\mu_k(\widetilde{\textbf{x}},\rho,\widetilde{B}))=(\widetilde{\textbf{x}},\rho,\widetilde{B}).$

The seed $(\widetilde{\textbf{x}}^{\prime},\rho^{\prime},\widetilde{B}^{\prime})$ is said to be mutation-equivalent to the seed $(\widetilde{\textbf{x}},\rho,\widetilde{B})$, if $(\widetilde{\textbf{x}}^{\prime},\rho^{\prime},\widetilde{B}^{\prime})$ can be obtained from
$(\widetilde{\textbf{x}},\rho,\widetilde{B})$ by a sequence of seed mutations, i.e,
$$
(\widetilde{\textbf{x}}^{\prime},\rho^{\prime},\widetilde{B}^{\prime})=\mu_{k_t}(\ldots(\mu_{k_1}(\widetilde{\textbf{x}},\rho,\widetilde{B}))\ldots)
$$
for some $1\leq k_1,\ldots,k_t\leq n$. This operation gives an equivalence relation on seeds. Then one can define the generalized cluster algebras as follows.

\begin{definition}
Let $\mathcal{S}$ be a set consisting of all generalized seeds in $\Fcal$ which are mutation-equivalent to the initial seed $(\widetilde{\textbf{x}},\rho,\widetilde{B})$. The generalized cluster algebra $\mathcal{A}(\widetilde{\textbf{x}},\rho,\widetilde{B})$ is the $\ZZ[x_{n+1}^{\pm1},\ldots,x_{m}^{\pm1}]$-subalgebra of $\Fcal$ generated by all cluster variables from all seeds in $\mathcal{S}$.
\end{definition}

As in the classical theory of cluster algebras, the generalized cluster algebras also possess the Laurent phenomenon.

\begin{theorem}
(\cite{CS}) Each  cluster variable in $\mathcal{A}(\widetilde{\textbf{x}},\rho,\widetilde{B})$ is a Laurent polynomial in the initial
cluster variables.
\end{theorem}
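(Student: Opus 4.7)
The plan is to adapt the original Fomin--Zelevinsky proof of the Laurent phenomenon for ordinary cluster algebras to the polynomial exchange relations of Definition~\ref{mutation}(2). Fix the initial seed $\Sigma_0 = (\widetilde{\textbf{x}}, \rho, \widetilde{B})$ and set $\mathcal{L} := \mathbb{Z}[x_{n+1}^{\pm 1},\ldots,x_m^{\pm 1}][x_1^{\pm 1},\ldots,x_n^{\pm 1}]$. Since every cluster variable is produced by some finite sequence of mutations, it suffices to show, by induction on the length $t$ of a mutation path $\Sigma_0 \xrightarrow{\mu_{k_1}} \Sigma_1 \to \cdots \xrightarrow{\mu_{k_t}} \Sigma_t$, that every cluster variable of $\Sigma_t$ lies in $\mathcal{L}$. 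The cases $t=0,1$ are immediate from the exchange formula. For the inductive step only the freshly mutated entry $x_{k_t}^{(t)}$ requires attention; by the induction hypothesis it is at worst a rational function in $x_1,\ldots,x_m$ whose denominator is a product of Laurent polynomials in $x_1,\ldots,x_n$, and the problem is to rule out any non-monomial denominator.

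The main obstacle, and the technical heart of the argument, is an analogue of the Fomin--Zelevinsky \emph{Caterpillar Lemma}. I would back up two steps and study the double mutation
\[
\Sigma_{t-2}\xrightarrow{\mu_{k_{t-1}}}\Sigma_{t-1}\xrightarrow{\mu_{k_t}}\Sigma_t.
\]
When $k_{t-1}=k_t$ the involutivity property noted right after Definition~\ref{mutation} makes the composition trivial. When $k_{t-1}\neq k_t$, I would expand $x_{k_t}^{(t)}$ using the mutation formulas for $\widetilde{B}$, for the strings $\rho$, and for the cluster variables, and verify by direct computation that every apparent factor of $x_{k_{t-1}}^{(t-2)}$ in a denominator is annihilated by a matching factor in the numerator.

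The genuinely new feature relative to \cite{ca1} is that the exchange relation is a sum of $d_{k_t}+1$ monomial terms rather than a binomial, so the classical binomial manipulation must be replaced by a grouping of the sum by the summation index $s$. What rescues the computation is that each $\rho_{i,s}$ lies in the fixed polynomial ring $\mathbb{P}=\mathbb{Z}[x_{n+1},\ldots,x_m]$ and is unaffected by cluster mutations, so after grouping the cancellation identity parallels the classical binomial one; this is essentially the strategy used by Rupel in \cite{rupel} in the rank-two situation. Coupled with the coprimality remark that the polynomial $P_k = \sum_{s=0}^{d_k}\rho_{k,s}\prod_j x_j^{\beta_{jk}s+[-b_{jk}]_+}$ is not divisible in $\mathcal{L}$ by any $x_i$ with $i\neq k$---because the $s=0$ and $s=d_k$ terms contribute complementary pure-monomial parts in each such $x_i$---this shows that the denominator of every cluster variable, expressed inside $\mathcal{L}$, is a pure monomial in $x_1,\ldots,x_n$, closing the induction.
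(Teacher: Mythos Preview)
The paper does not contain a proof of this statement: it is quoted from Chekhov--Shapiro \cite{CS} as background at the end of Section~2, with no argument given. So there is nothing in the present paper against which to compare your proposal. (The only Laurent-phenomenon proof the paper actually gives is Theorem~\ref{laurent} for the rank-two \emph{quantum} case, and that proof is a direct four-variable computation rather than a Caterpillar-type induction.)

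As for the proposal itself, the overall strategy---adapt the Fomin--Zelevinsky Caterpillar Lemma to polynomial exchange relations, using that the $\rho_{i,s}$ live in $\mathbb{P}$ and that the extremal terms $s=0$ and $s=d_k$ force coprimality---is exactly the route taken in \cite{CS}. However, the sketch as written is too coarse at the key step. Backing up only two mutations and ``verifying by direct computation that every apparent factor of $x_{k_{t-1}}^{(t-2)}$ in a denominator is annihilated'' is not the Caterpillar Lemma: the actual argument requires a depth-three configuration $t_0\!-\!t_1\!-\!t_2\!-\!t_3$ together with a side-branch at $t_1$, and one must separately check (i) that the new variables at distance three already lie in $\mathcal{L}(t_0)$, and (ii) that consecutive exchange polynomials are coprime as elements of the appropriate Laurent ring. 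Your coprimality observation addresses (ii), but (i) is precisely where the multi-term exchange polynomial must be manipulated, and the proposal does not indicate how that step goes through; the ``grouping by $s$'' remark is suggestive but not yet an argument.
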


\section{A quantum analogue of generalized cluster algebras}

In this section, we will give a quantum deformation of a class of generalized cluster algebras in which
\begin{enumerate}
  \item[(1)] the coefficients $\rho_{i,r}$ are integers and satisfy
$$\rho_{i,r}=\rho_{i,d_i-r}$$
for each $i$ and $r$, where $1\leq i\leq n$ and $0\leq r\leq d_i$.
  \item[(2)] for each $1\leq k\leq n$, the above positive integers $d_k$ satisfy $d_k|b_{jk}$ for all $1\leq j\leq m$.
\end{enumerate}
\begin{remark}
It is easy to see that the coefficients in this class of generalized cluster algebras do not change under mutation.
\end{remark}

\begin{definition}\label{Lambda B}
Let $\widetilde{B}=(b_{ij})_{}$ be an $m\times n$ integer matrix with $m\geq n$ and $\Lambda=(\lambda_{ij})$ be an $m\times m$ skew-symmetric integer matrix. The pair $(\Lambda,\widetilde{B})$ is said to be compatible if we have
$$-\Lambda\widetilde{B}=\begin{bmatrix}D\\0
\end{bmatrix}$$ where $D=\text{diag}\{\widetilde{d}_1,\ldots, \widetilde{d}_n\}$ is an $n\times n$ diagonal matrix with positive integers diagonal entries $\widetilde{d}_i$, $1\leq i\leq n.$
\end{definition}

By \cite[Proposition 3.3]{berzel}, if the pair $(\Lambda,\widetilde{B})$ is compatible, then  the matrix $\widetilde{B}=\begin{bmatrix}B\\C
\end{bmatrix}$ has full rank and the product matrix
$DB$ is skew-symmetric.

Let the sign $\varepsilon$ be an element in $\{+1, -1\}$. For each $k\in\{1,2,\ldots,n\}$ and each sign $\varepsilon$, by  the discussion in \cite{bfz}, the matrix $\widetilde{B}^{\prime}=\mu_k(\widetilde{B})$ can be rewritten as follows
$$
\widetilde{B}^{\prime}=E_{\varepsilon}\widetilde{B}F_{\varepsilon},
$$
where
\begin{enumerate}
  \item[(1)] $E_{\varepsilon}=(e_1,e_2,\ldots,e_{k-1},e^{\prime}_{k},e_{k+1},\ldots,e_m)$ is the square matrix of degree $m$ with
  $$e_i=(0,\ldots,0,1,0,\ldots,0)^T\in\ZZ^{m}$$
      for $i\neq k$, and
      $$e^{\prime}_{k}=([-\varepsilon b_{1k}]_{+},\ldots, [-\varepsilon b_{(k-1)k}]_{+},-1,[-\varepsilon b_{(k+1)k}]_{+}, \ldots, [-\varepsilon b_{mk}]_{+})^T;$$
  \item[(2)] $F_{\varepsilon}=(f_1,f_2,\ldots,f_{k-1},f^{\prime}_{k},f_{k+1},\ldots,f_n)^{T}$ is the square matrix of degree $n$ with
  $$f_j=(0,\ldots,0,1,0,\ldots,0)\in\ZZ^{n}$$
      for $j\neq k$, and
      $$f^{\prime}_{k}=([\varepsilon b_{k1}]_{+},\ldots, [\varepsilon b_{k(k-1)}]_{+},-1,[\varepsilon b_{k(k+1)}]_{+}, \ldots, [\varepsilon b_{kn}]_{+}).$$
\end{enumerate}
Here $()^{T}$ denotes the transpose of the matrix.

For a compatible pair $(\Lambda,\widetilde{B})$, we denote
$$\Lambda^{\prime}:=E^{T}_{\varepsilon}\Lambda E_{\varepsilon}.$$
It is easy to see that $\Lambda^{\prime}$ is a skew-symmetric matrix. By \cite[Proposition 3.4]{berzel}, the new pair $(\Lambda^{\prime}, B^{\prime})$ is also compatible and $\Lambda^{\prime}$ is independent of the choice of the sign $\varepsilon$. We write $(\Lambda^{\prime},\widetilde{B}^{\prime})= \mu_k(\Lambda,\widetilde{B})$ and say that $(\Lambda^{\prime},\widetilde{B}^{\prime})$ is the mutation of $(\Lambda,\widetilde{B})$ in direction $k$. By \cite[Proposition 3.6]{berzel}, it follows that
$$\mu_{k}(\mu_{k}(\Lambda,\widetilde{B}))=(\Lambda,\widetilde{B}),$$
i.e., $\mu_k$ is an involution.

The skew-symmetric matrix $\Lambda=(\lambda_{ij})$ gives the skew-symmetric bilinear form on the lattice $\ZZ^{m}$ through the mapping
$$\Lambda:\ZZ^{m}\times \ZZ^{m}\longrightarrow \ZZ$$  which sends $(c,d)$ to  $c^T\Lambda d$ for any $c,d\in\ZZ^{m}$.

Let $q$ be a formal variable and  let $\mathbb{Z}[q^{\pm\frac{1}{2}}]\subset \mathbb{Q}(q^{\frac{1}{2}})$ denote the ring of integer Laurent polynomial in the variable $q^{\frac{1}{2}}$.
\begin{definition}
The quantum torus $\mathcal{T}=\mathcal{T}(\Lambda)$ is the $\ZZ[q^{\pm\frac{1}{2}}]$-algebra with a distinguished $\ZZ[q^{\pm\frac{1}{2}}]$-basis $\{X(c)|
c\in \ZZ^{m}\}$ and the multiplication is given by
$$
X(c)X(d)=q^{\frac{1}{2}\Lambda(c,d)}X(c+d)
$$
for any $c,d\in\ZZ^{m}$.
\end{definition}
By following the above definition, we have that
$$
X(c)X(d)=q^{\Lambda(c,d)}X(d)X(c),
$$
$$
X(0)=1~\text{and}~X(-c)=X(c)^{-1}.$$

For each $1\leq i\leq m$, if we set $X(e_i)=X_i$, then
$$
X(c)=q^{{\frac{1}{2}{\tiny{\displaystyle\sum_{l<k}c_kc_l\lambda_{kl}}}}}X_{1}^{c_1}X_{2}^{c_2}\ldots X_{m}^{c_m}
$$
for each $c=(c_1,\ldots,c_m)\in\ZZ^{m}$. The mapping $X: \ZZ^{m} \longrightarrow \mathcal{F} \setminus\{0 \}$ sending any $c$ to $X(c)$ is called a toric frame.

Denote by $\textbf{h}=(\textbf{h}_1,\ldots,\textbf{h}_n)$  with the $i$-th string $\textbf{h}_i=\{h_{i,1}(q^{\frac{1}{2}}),\ldots,h_{i,d_i}(q^{\frac{1}{2}})\}$ for $1\leq i\leq n$, where $h_{i,0}(q^{\frac{1}{2}})=h_{i,d_i}(q^{\frac{1}{2}})=1$ and $h_{i,r}(q^{\frac{1}{2}})$ are  Laurent polynomials in $\ZZ[q^{\pm\frac{1}{2}}]$ satisfying $h_{i,r}(q^{\frac{1}{2}})=h_{i,d_i-r}(q^{\frac{1}{2}})$.

The following definition can be considered as a quantum analogue of Definition \ref{generalized seed}.
\begin{definition}
Let $\widetilde{B}$, $\textbf{h}$, $\Lambda$ and $X$ be described as above. The quadruple $(X,\textbf{h}, \Lambda, \widetilde{B})$ is called a quantum seed if
$(\Lambda,\widetilde{B})$ is a compatible pair.
\end{definition}

Now we are ready to give a quantum analogue of Definition \ref{mutation}.
\begin{definition}
Let $(X, \textbf{h}, \Lambda, \widetilde{B})$ be a quantum seed. For any $1\leq k\leq n$, the new quadruple $\mu_{k}(X, \textbf{h}, \Lambda, \widetilde{B}):=(X^{\prime}, \textbf{h}^{\prime}, \Lambda^{\prime}, \widetilde{B}^{\prime})$ obtained from $(X, \textbf{h}, \Lambda, \widetilde{B})$ in direction $k$ is defined by
\begin{equation}\label{}
    X^{\prime}(e_i)=\begin{cases}
                   X(e_i) & \text{ if $i\neq k$, }\\  \\
                   \sum\limits_{r=0}^{d_k}h_{k,r}(q^{\frac{1}{2}})X(\sum\limits_{j=1}^{m} (r[\beta_{jk}]_{+}+ (d_k-r)[-\beta_{jk}]_{+})e_j -e_k) & \text{ otherwise,}
                  \end{cases}
  \end{equation}
 and
 \begin{equation}
  \textbf{h}^{\prime}=\mu_{k}(\textbf{h}),\ \ \Lambda^{\prime}=\mu_{k}(\Lambda),\ \  \widetilde{B}^{\prime}=\mu_{k}(\widetilde{B}),
  \end{equation}
where $\beta_{ij}=\displaystyle\frac{b_{ij}}{d_j}\in\ZZ$.
\end{definition}

\begin{proposition}\label{prop of (M,B)}
The quadruple $(X^{\prime}, \textbf{h}^{\prime}, \Lambda^{\prime}, \widetilde{B}^{\prime})$ is a quantum seed.
\end{proposition}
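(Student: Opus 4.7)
The plan is to verify all the data constituting a quantum seed. Compatibility of $(\Lambda',\widetilde B')$ is exactly Berenstein--Zelevinsky's Proposition~3.4, already invoked in the discussion introducing $\mu_k(\Lambda,\widetilde B)$. The string $\textbf{h}'$ inherits the normalization $h'_{i,0}=h'_{i,d_i}=1$ and the palindromic property $h'_{i,r}=h'_{i,d_i-r}$: the rule $\mu_k(\textbf{h})$ is the obvious analogue of the $\rho$-mutation of the classical Definition, fixing $\textbf{h}_i$ for $i\neq k$ and reversing $\textbf{h}_k$, and reversing a palindromic sequence does not alter it. The divisibility condition $d_k\mid b'_{jk}$ for all $1\le j\le m$ (which Section~3 assumes for the class of generalized cluster algebras being quantized) also passes to $\widetilde B'$ from the assumed $d_k\mid b_{jk}$ by direct inspection of the matrix-mutation formula.

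The substantive content is to show $X'$ is a toric frame with respect to $\Lambda'$, which amounts to checking that $X'(e_i)X'(e_j)=q^{\Lambda'(e_i,e_j)}X'(e_j)X'(e_i)$ for all $i,j$. For $i,j\neq k$ nothing needs to be done: $E_\varepsilon$ affects only the $k$-th row and column, so $\Lambda'(e_i,e_j)=\lambda_{ij}$ and $X'(e_i)=X(e_i)$, $X'(e_j)=X(e_j)$. The only case to treat is $j=k$, $i\neq k$. Writing $v_r:=\sum_{j=1}^{m}\bigl(r[\beta_{jk}]_{+}+(d_k-r)[-\beta_{jk}]_{+}\bigr)e_j-e_k$ so that $X'(e_k)=\sum_{r=0}^{d_k}h_{k,r}(q^{1/2})X(v_r)$, and noting that the scalars $h_{k,r}(q^{1/2})\in\ZZ[q^{\pm1/2}]$ are central, the desired relation follows once one establishes (i) $\Lambda(v_r,e_i)$ is independent of $r$, and (ii) the common value equals $\Lambda'(e_k,e_i)$.

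For (i), the $r$-coefficient of $\Lambda(v_r,e_i)$ equals $\sum_j([\beta_{jk}]_{+}-[-\beta_{jk}]_{+})\lambda_{ji}=\sum_j\beta_{jk}\lambda_{ji}=d_k^{-1}(\widetilde B^{T}\Lambda)_{ki}$. Transposing the compatibility identity $-\Lambda\widetilde B=\begin{bmatrix}D\\0\end{bmatrix}$ yields $\widetilde B^{T}\Lambda=\begin{bmatrix}D&0\end{bmatrix}$, whose $(k,i)$-entry vanishes whenever $i\neq k$; this establishes (i). For (ii), evaluating at $r=0$ and using $d_k[-\beta_{jk}]_{+}=[-b_{jk}]_{+}$ gives $\Lambda(v_0,e_i)=\sum_{j}[-b_{jk}]_{+}\lambda_{ji}-\lambda_{ki}$, which coincides with the direct matrix computation of $(E_{+}^{T}\Lambda E_{+})_{ki}$ obtained from the explicit form of $E_{\varepsilon}$ at $\varepsilon=+1$. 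With the commutation relations verified, $X'$ extends unambiguously to a toric frame on $\ZZ^{m}$ by the standard formula dictated by $\Lambda'$. The main (and rather mild) obstacle is step (ii): unwinding the sign conventions so that the $r$-constant value $\Lambda(v_r,e_i)$ is correctly identified with the intended entry of $\Lambda'$; everything else is either cited from \cite{berzel} or an elementary consequence of compatibility.
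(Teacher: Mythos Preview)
Your argument is correct and follows essentially the same approach as the paper's proof: reduce to the single nontrivial commutation $X'(e_i)X'(e_k)=q^{\Lambda'(e_i,e_k)}X'(e_k)X'(e_i)$ for $i\neq k$, then use compatibility ($\sum_j b_{jk}\lambda_{ij}=0$) to kill the $r$-dependence and identify the remaining constant with the appropriate entry of $E_\varepsilon^T\Lambda E_\varepsilon$. The only differences are cosmetic: you isolate the computation cleanly into steps (i) and (ii) and work with $\varepsilon=+1$, whereas the paper expands everything in one run and implicitly uses $\varepsilon=-1$; you also make explicit the side checks (compatibility of $(\Lambda',\widetilde B')$, palindromicity of $\textbf{h}'$, divisibility $d_k\mid b'_{jk}$) that the paper leaves implicit.
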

\begin{proof}
We need to compute the following relations for any $1\leq i,j\leq m$:
$$X^{\prime}(e_i)X^{\prime}(e_j)=q^{\Lambda^{\prime}(e_i,e_j)} X^{\prime}(e_j)X^{\prime}(e_i).$$
Note that $\Lambda^{\prime}=\mu_k(\Lambda)$. For $i\neq k$ and $j\neq k$,  it follows that $\lambda^{\prime}_{ij}=\lambda_{ij}$. Thus
$$
X^{\prime}(e_i)X^{\prime}(e_j)=q^{\Lambda^{\prime}(e_i,e_j)} X^{\prime}(e_j)X^{\prime}(e_i).
$$

Now we only need to check the following  relation for $i\neq k$:
\begin{equation}\label{commutative rel}
X^{\prime}(e_i)X^{\prime}(e_k)=q^{\Lambda^{\prime}(e_i,e_k)} X^{\prime}(e_k)X^{\prime}(e_i).
\end{equation}

Note that $\Lambda^{\prime}=E_{\varepsilon}^{T}\Lambda E_{\varepsilon}$. Therefore, we have that
\begin{align*}
&X^{\prime}(e_i)X^{\prime}(e_k)=X(e_i)X^{\prime}(e_k)\\
=&\sum\limits_{r=0}^{d_k}h_{k,r}(q^{\frac{1}{2}})X(e_i)X(\sum\limits_{j=1}^{m}(r[\beta_{jk}]_{+}+ (d_k-r)[-\beta_{jk}]_{+})e_j -e_k)\\
=&\sum\limits_{r=0}^{d_k}h_{k,r}(q^{\frac{1}{2}}) q^{\Lambda(e_i,\sum\limits_{j=1}^{m}(r[\beta_{jk}]_{+}+ (d_k-r)[-\beta_{jk}]_{+})e_j -e_k)} \\
& \cdot X(\sum\limits_{j=1}^{m}(r[\beta_{jk}]_{+}+ (d_k-r)[-\beta_{jk}]_{+})e_j -e_k)X(e_i).
\end{align*}

Note that
\begin{align*}
&\Lambda(e_i,\sum\limits_{j=1}^{m}(r[\beta_{jk}]_{+}+ (d_k-r)[-\beta_{jk}]_{+})e_j -e_k)\\
=&\sum\limits_{j=1}^{m}(r\beta_{jk}+d_k[-\beta_{jk}]_{+})\lambda_{ij}-\lambda_{ik} \\ =&\sum\limits_{j=1}^{m}r\beta_{jk}\lambda_{ij}+\sum\limits_{j=1}^{m}[-b_{jk}]_{+}\lambda_{ij}-\lambda_{ik}\\
=&\frac{r}{d_k}\sum\limits_{j=1}^{m}b_{jk}\lambda_{ij}+\sum\limits_{j=1}^{m}[-b_{jk}]_{+}\lambda_{ij}-\lambda_{ik}
\end{align*}

Let $(E_{\varepsilon})_i$ denote the $i$-th column of $E_{\varepsilon}$. Note that
$$
\Lambda^{\prime}(e_i,e_k)=e_{i}^{T}\Lambda(E_{\varepsilon})_k=\Lambda(e_i,\sum\limits_{j=1}^{m}[b_{jk}]_+e_{j}-e_k) =\sum\limits_{j=1}^{m}[b_{jk}]_+ \lambda_{ij}-\lambda_{ik}.
$$
By following the fact  $\sum\limits_{j=1}^{m}b_{jk}\lambda_{ij}=0$, we have that $$\sum\limits_{j=1}^{m}[-b_{jk}]_+\lambda_{ij}= \sum\limits_{j=1}^{m}[b_{jk}]_+\lambda_{ij}.$$

Note that
$$
\frac{r}{d_k}\sum\limits_{j=1}^{m}\lambda_{ij}b_{jk}+\sum\limits_{j=1}^{m}[-b_{jk}]_{+}\lambda_{ij}-\lambda_{ik}= \sum\limits_{j=1}^{m}[-b_{jk}]_{+}\lambda_{ij}-\lambda_{ik}.
$$
Thus $$\Lambda(e_i,\sum\limits_{j=1}^{m}(r[\beta_{jk}]_{+}+ (d_k-r)[-\beta_{jk}]_{+})e_j -e_k)=\Lambda^{\prime}(e_i,e_k).$$
Hence $$
X^{\prime}(e_i)X^{\prime}(e_k)=q^{\lambda^{\prime}_{i,k}} X^{\prime}(e_k)X^{\prime}(e_i).
$$
This completes the proof.
\end{proof}

\begin{proposition}\label{involution}
For each $1\leq k\leq n$, the mutation $\mu_k$ is an involution, i.e.,
$$
\mu_k(\mu_k(X, \textbf{h}, \Lambda, \widetilde{B}))=(X, \textbf{h}, \Lambda, \widetilde{B}).
$$
\end{proposition}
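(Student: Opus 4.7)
The plan is to exploit that $\mu_k$ acts componentwise on the quadruple $(X,\textbf{h},\Lambda,\widetilde{B})$, and to verify involutivity of each component separately. The involution $\mu_k^2=\text{id}$ on $\widetilde{B}$ is classical; on $\Lambda$ it is \cite[Proposition~3.6]{berzel} (cited in the excerpt); on $\textbf{h}$ it is immediate from the palindromy $h_{k,r}=h_{k,d_k-r}$, since then $h''_{k,r}=h'_{k,d_k-r}=h_{k,r}$. Hence it remains to show $\mu_k^2(X)(e_i)=X(e_i)$ for all $1\le i\le m$. For $i\ne k$ this is immediate, since $X'(e_i)=X(e_i)$.

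For $i=k$, I would apply the mutation formula twice. Using $\beta'_{jk}=-\beta_{jk}$ (from $b'_{jk}=-b_{jk}$) together with $h'_{k,r}=h_{k,d_k-r}$, and then substituting $r\mapsto d_k-r$ and invoking the palindromy, one obtains
$$X''(e_k)=\sum_{s=0}^{d_k}h_{k,s}(q^{\frac{1}{2}})\,X'(w_s),$$
where $w_s=\sum_{j=1}^{m}(s[\beta_{jk}]_{+}+(d_k-s)[-\beta_{jk}]_{+})e_j-e_k$ are exactly the exchange vectors used in the first mutation.

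The next step is to left-multiply both $X''(e_k)$ and $X(e_k)$ by $X'(e_k)$. Setting $u_s:=e_k+w_s$, since $\beta_{kk}=0$ the vector $u_s$ has vanishing $k$-th component, and because $X'(e_j)=X(e_j)$ for $j\ne k$ together with $\lambda'_{ij}=\lambda_{ij}$ for $i,j\ne k$, we have $X'(u_s)=X(u_s)$. Computing the two products in the respective quantum tori then yields
$$X'(e_k)\,X''(e_k)=\sum_{s}h_{k,s}(q^{\frac{1}{2}})\,q^{\frac{1}{2}\Lambda'(e_k,w_s)}X(u_s),\qquad X'(e_k)\,X(e_k)=\sum_{s}h_{k,s}(q^{\frac{1}{2}})\,q^{\frac{1}{2}\Lambda(w_s,e_k)}X(u_s).$$
Since $X'(e_k)$ is a unit in $\Fcal$, the desired identity $X''(e_k)=X(e_k)$ would follow once we verify that $\Lambda'(e_k,w_s)=\Lambda(w_s,e_k)$ for every $s$.

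The main obstacle lies in this last identity, equivalent to $(\Lambda'+\Lambda)(e_k,w_s)=0$. Using the explicit formula $\lambda'_{kj}+\lambda_{kj}=\sum_m [-b_{mk}]_{+}\lambda_{mj}$ for $j\ne k$ (a direct consequence of $\Lambda'=E^{T}_{\varepsilon}\Lambda E_{\varepsilon}$ with $\varepsilon=+1$), and the compatibility identity $\sum_j b_{jk}\lambda_{jm}=0$ for $m\ne k$ (already used in the proof of Proposition~\ref{prop of (M,B)}) to see that $\sum_{j\ne k}c_j\lambda_{mj}=\sum_j[-b_{jk}]_{+}\lambda_{mj}$ is independent of $s$, the expression $(\Lambda'+\Lambda)(e_k,w_s)$ collapses to
$$\sum_{m,j}[-b_{mk}]_{+}[-b_{jk}]_{+}\lambda_{mj},$$
which vanishes because the coefficient is symmetric in $(j,m)$ while $\lambda_{mj}$ is antisymmetric. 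This forces $X''(e_k)=X(e_k)$ and completes the argument.
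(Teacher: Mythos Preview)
Your proof is correct and follows essentially the same route as the paper's own argument: both compute $X'(e_k)X(e_k)$ and $X'(e_k)X''(e_k)$, identify the summand vectors via $b'_{jk}=-b_{jk}$ and the palindromy $h_{k,r}=h_{k,d_k-r}$, and then match the $q$-powers by showing $\Lambda'(e_k,w_s)=\Lambda(w_s,e_k)$ using compatibility together with the skew-symmetry of $\Lambda$, before cancelling $X'(e_k)$. The only difference is organizational: you phrase the key $q$-power identity as $(\Lambda'+\Lambda)(e_k,w_s)=0$ and apply compatibility first (to kill the $s$-dependent part) and skew-symmetry second (to kill the residual $\sum_{m,j}[-b_{mk}]_+[-b_{jk}]_+\lambda_{mj}$), whereas the paper applies these two ingredients in the opposite order.
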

\begin{proof}
It suffices to show that $\mu_k(\mu_k(X(e_i)))=X(e_i)$ for any  $1\leq i \leq n$.

For $i\neq k$, it is easy to see that $\mu_k(\mu_k(X(e_i)))=X(e_i)$.

For $i=k$, we have
\begin{align*}
& X^{\prime}(e_k)X(e_k)\\
=&\sum\limits_{r=0}^{d_k} h_{k,r}(q^{\frac{1}{2}})X(\sum\limits_{j=1}^{m}(r[\beta_{jk}]_{+}+ (d_k-r)[-\beta_{jk}]_{+})e_j-e_k+e_k)\\
&\cdot q^{\frac{1}{2}\Lambda(\sum\limits_{j=1}^{m}(r[\beta_{jk}]_{+}+ (d_k-r)[-\beta_{jk}]_{+})e_j-e_k,e_k)}\\
=&\sum\limits_{r=0}^{d_k} h_{k,r}(q^{\frac{1}{2}})X(\sum\limits_{j=1}^{m}(r\beta_{jk}+ [-b_{jk}]_{+})e_j)
\cdot q^{\frac{1}{2}\Lambda(\sum\limits_{j=1}^{m}(r\beta_{jk}+ [-b_{jk}]_{+})e_j,e_k)}
\end{align*}
and
\begin{align*}
&X^{\prime}(e_k)X^{\prime\prime}(e_k)\\
=&\sum\limits_{r=0}^{d_k} h_{k,r}(q^{\frac{1}{2}})X^{\prime} (e_k+\sum\limits_{j=1}^{m}(r[\beta_{jk}^{\prime}]_{+}+ (d_k-r)[-\beta_{jk}^{\prime}]_{+})e_j-e_k)\\
&\cdot q^{\frac{1}{2}\Lambda^{\prime}(e_k,\sum\limits_{j=1}^{m}(r[\beta^{\prime}_{jk}]_{+}+ (d_k-r)[-\beta^{\prime}_{jk}]_{+})e_j-e_k)}\\
=&\sum\limits_{r=0}^{d_k} h_{k,r}(q^{\frac{1}{2}})X^{\prime} (\sum\limits_{j=1}^{m}(r[\beta_{jk}^{\prime}]_{+}+ (d_k-r)[-\beta_{jk}^{\prime}]_{+})e_j)\\
&\cdot q^{\frac{1}{2}\Lambda^{\prime}(e_k,\sum\limits_{j=1}^{m}(r[\beta^{\prime}_{jk}]_{+}+ (d_k-r)[-\beta^{\prime}_{jk}]_{+})e_j)}.
\end{align*}

By using  $h_{k,r}(q^{\frac{1}{2}})=h_{k,d_k-r}(q^{\frac{1}{2}})$,  we have that

\begin{align*}
&X^{\prime}(e_k)X^{\prime\prime}(e_k)\\
=&\sum\limits_{r=0}^{d_k} h_{k,d_k-r}(q^{\frac{1}{2}})X^{\prime} (\sum\limits_{j=1}^{m}((d_k-r)[\beta_{jk}^{\prime}]_{+}+ r[-\beta_{jk}^{\prime}]_{+})e_j)\\
&\cdot q^{\frac{1}{2}\Lambda^{\prime}(e_k,\sum\limits_{j=1}^{m}((d_k-r)[\beta^{\prime}_{jk}]_{+}+ r[-\beta^{\prime}_{jk}]_{+})e_j)}\\
=&\sum\limits_{r=0}^{d_k} h_{k,r}(q^{\frac{1}{2}})X^{\prime}(\sum\limits_{j=1}^{m}(-r\beta_{jk}^{\prime}+ [b_{jk}^{\prime}]_{+})e_j)
\cdot q^{\frac{1}{2}\Lambda^{\prime}(e_k,\sum\limits_{j=1}^{m}(-r\beta^{\prime}_{jk}+ [b^{\prime}_{jk}]_{+})e_j)}.
\end{align*}

For $j\neq k$,  we have that $$b_{jk}^{\prime}=\sum\limits_{i=1}^{n}(b_{ji}f_{ik}+[-\varepsilon b_{jk}]_{+}b_{ki}f_{ik})=-b_{jk}-[-\varepsilon b_{jk}]_{+}b_{kk}=-b_{jk}.$$ Thus $b_{jk}^{\prime}=-b_{jk}$ and $\beta_{jk}^{\prime}=-\beta_{jk}$. It follows that
$$
\sum\limits_{j=1}^{m}(r\beta_{jk}+ [-b_{jk}]_{+})e_j= \sum\limits_{j=1}^{m}(-r\beta_{jk}^{\prime}+ [b_{jk}^{\prime}]_{+})e_j.
$$
Recall that $X(a_1,\ldots,a_m)=q^{\frac{1}{2}\sum_{i<j}a_ia_j\lambda_{ji}}X_{1}^{a_1}\ldots X_{m}^{a_m}$. We get
$$
X(\sum\limits_{j=1}^{m}(r\beta_{jk}+ [-b_{jk}]_{+})e_j)= X^{\prime}(\sum\limits_{j=1}^{m}(-r\beta_{jk}^{\prime}+ [b_{jk}^{\prime}]_{+})e_j).
$$

Since $\Lambda^{\prime}=E_{+}^{T}\Lambda E_+$, we get
$$
\lambda^{\prime}_{kj}=(\sum\limits_{i=1}^{m}[-b_{ik}]_+ e_{i}^{T}-e_{k}^{T})\Lambda e_j=\Lambda(\sum\limits_{j=1}^{m}[-b_{ik}]_+ e_{i}-e_k,e_j).
$$
Thus
\begin{align*}
&\Lambda^{\prime}(e_k,\sum\limits_{j=1}^{m}(-r\beta_{jk}^{\prime}+ [b_{jk}^{\prime}]_{+})e_j)\\
=&\sum\limits_{j=1}^{m}(r\beta_{jk}+ [-b_{jk}]_{+})\lambda^{\prime}_{kj}\\
=&\sum\limits_{j=1}^{m}(r\beta_{jk}+ [-b_{jk}]_{+})\Lambda(\sum\limits_{i=1}^{m}[-b_{ik}]_{+}e_i-e_k,e_j)\\
=&\Lambda(\sum\limits_{i=1}^{m}[-b_{ik}]_{+}e_i-e_k,\sum\limits_{j=1}^{m}(r\beta_{jk}+ [-b_{jk}]_{+})e_j)\\
=&\Lambda(\sum\limits_{i=1}^{m}[-b_{ik}]_{+}e_i,\sum\limits_{j=1}^{m}(r\beta_{jk}+ [-b_{jk}]_{+})e_j) +\Lambda(\sum\limits_{j=1}^{m}(r\beta_{jk}+ [-b_{jk}]_{+})e_j,e_k).
\end{align*}
Note that
\begin{align*}
&\Lambda(\sum\limits_{i=1}^{m}[-b_{ik}]_{+}e_i,\sum\limits_{j=1}^{m}(r\beta_{jk}+ [-b_{jk}]_{+})e_j) \\
=&\Lambda(\sum\limits_{i=1}^{m}[-b_{ik}]_{+}e_i,\sum\limits_{j=1}^{m}r\beta_{jk} e_j)\\
=&\frac{r}{d_k}\Lambda(\sum\limits_{i=1}^{m}[-b_{ik}]_{+}e_i,\sum\limits_{j=1}^{m}b_{jk}e_j) \\
=&\frac{r}{d_k}(\sum\limits_{j=1}^{m}[-b_{jk}]_{+}e_{j}^{T})\Lambda(\widetilde{B})_{k}\\
=&\frac{r}{d_k}(\sum\limits_{j=1}^{m}[-b_{jk}]_{+}e_{j}^{T})(-\widetilde{d}_ke_k)\\=&0,
\end{align*}
where $(\widetilde{B})_{k}$ denotes the $k$-th column of $\widetilde{B}$. Therefore
$$
\Lambda^{\prime}(e_k,\sum\limits_{j=1}^{m}(-r\beta_{jk}^{\prime}+ [b_{jk}^{\prime}]_{+})e_j) =\Lambda(\sum\limits_{j=1}^{m}(r\beta_{jk}+ [-b_{jk}]_{+})e_j,e_k).
$$
Hence $X^{\prime}(e_k)X(e_k)=X^{\prime}(e_k)X^{\prime\prime}(e_k)$ from which we deduce that $X(e_k)=X^{\prime\prime}(e_k)$.
\end{proof}

According to Proposition \ref{involution}, the following relation on quantum seeds is an equivalence relation:
$(X^{\prime}, \textbf{h}^{\prime}, \Lambda^{\prime}, \widetilde{B}^{\prime})$ is called to be mutation-equivalent to $(X, \textbf{h}, \Lambda, \widetilde{B})$,
if $(X^{\prime}, \textbf{h}^{\prime}, \Lambda^{\prime}, \widetilde{B}^{\prime})$ can be obtained from
$(X, \textbf{h}, \Lambda, \widetilde{B})$ by a sequence of seed mutations, i.e.
$$(X^{\prime}, \textbf{h}^{\prime}, \Lambda^{\prime}, \widetilde{B}^{\prime})=\mu_{k_t}(\ldots(\mu_{k_1}(X, \textbf{h}, \Lambda, \widetilde{B}))\ldots)$$ for some $1\leq k_1,\ldots,k_t\leq n$. The set
$$
\{X^{\prime}(e_1),\ldots,X^{\prime}(e_n)\}
$$
is called a cluster of $(X^{\prime}, \textbf{h}^{\prime}, \Lambda^{\prime}, \widetilde{B}^{\prime})$ and $X^{\prime}(e_i)$ are called cluster variables for $1\leq i\leq n$.

Now we can define generalized quantum cluster algebras as follows.

\begin{definition}
The generalized quantum cluster algebra $\mathcal{A}(X, \textbf{h}, \Lambda, \widetilde{B})$ associated with the initial seed $(X, \textbf{h}, \Lambda, \widetilde{B})$, is the $\ZZ[q^{\pm\frac{1}{2}}][X_{n+1}^{\pm1},\dots,X_{m}^{\pm1}]$-subalgebra of $\Fcal$ generated by the cluster variables from the seeds which are mutation-equivalent to $(X, \textbf{h}, \Lambda, \widetilde{B})$.
\end{definition}

\begin{remark} We have that
\begin{enumerate}
      \item[(1)] if $d_k=1$ for all $1\leq k\leq n$, then the generalized quantum cluster algebra $\mathcal{A}(X,\textbf{h},\Lambda,\widetilde{B})$ is exactly the quantum cluster algebra introduced by Berenstein and Zelevinsky \cite{berzel};
 \item[(2)] if $q=1$, then the generalized quantum cluster algebra $\mathcal{A}(X, \textbf{h}, \Lambda, \widetilde{B})$ is exactly a class of  generalized cluster algebras introduced in the beginning of this section.
 \end{enumerate}

\end{remark}

We conclude this section by considering the following simplest nontrivial example of a generalized quantum
cluster algebra.
\begin{example}(Type $\emph{B}_2$)
Let $\mathcal{A}(1,2)$ denote the generalized quantum cluster algebra associated with the compatible pair $(\Lambda,\widetilde{B})$, where $\textbf{d}=(2,1)$,
\begin{equation*}
\Lambda=\left(
  \begin{array}{cc}
    0 & 1 \\
    -1 & 0 \\
  \end{array}
\right)
\text{~and~}
B=\widetilde{B}=\left(
  \begin{array}{cc}
    0 & 1 \\
    -2 & 0 \\
  \end{array}
\right).
\end{equation*}

The generalized quantum cluster algebra $\mathcal{A}(1,2)$ is the $\ZZ[q^{\pm\frac{1}{2}}]$-subalgebra of $\Fcal$ generated by $\{X_i~|~i\in\ZZ\}$, where the cluster variables $X_i$ are given by the following exchange relations
\begin{equation}\label{}
   X_{k-1}X_{k+1}=\begin{cases}
                   1+q^{\frac{1}{2}}X_k & \text{ if $k$ is odd,}\\
                   1+q^{\frac{1}{2}}h(q^{\frac{1}{2}})X_k+qX_{k}^{2} & \text{ if $k$ is even,}
                  \end{cases}
\end{equation}
for any $h(q^{\frac{1}{2}})\in\ZZ[q^{\pm\frac{1}{2}}]$.

For for $a_1, a_2 \in \mathbb{Z}$, denote by $X(a_1,a_2):=q^{-\frac{a_1a_2}{2}}X_{1}^{a_1}X_{2}^{a_2}$.  We can compute all cluster variables  as follows:
\begin{enumerate}
\item[] $
X_3=X(-1,-2)+X(-1,0)+h(q^{\frac{1}{2}})X(-1,1);
$
\item[]$
X_4=X(0,-1)+X(-1,1)+h(q^{\frac{1}{2}})X(-1,0)+X(-1,-1);
$
\item[]
$X_5=X(1,-2)+(q^{-\frac{1}{2}}+q^{\frac{1}{2}})X(0,-2)+X(-1,-2)+X(-1,0)\\
{\hspace{1.0cm}}+h(q^{\frac{1}{2}})X(-1,-1)+h(q^{\frac{1}{2}})X(0,-1);$
\item[]$
X_6=X(0,-1)+X(1,-1);
$
\item[]$
X_7=X_1;
$
\item[]$
X_8=X_2.
$
\end{enumerate}

\end{example}

It follows  that  the Laurent phenomenon is true for type $\emph{B}_2$.

\section{Generalized quantum cluster algebras of rank two}

In this section we will prove that the Laurent phenomenon hold true for generalized quantum cluster algebras of rank two.

Consider the following based quantum torus (see \cite{berzel} for more details)
$$\mathcal{T}=\mathbb{Z}[q^{\pm\frac{1}{2}}][X_{1}^{\pm 1},X_{2}^{\pm 1}|X_{1}X_{2}=qX_{2}X_{1}].$$
 In this section, we denote by $\mathcal{F}$ the skew-field of fractions of the based quantum torus $\mathcal{T}$.

Let $P_1(x),P_2(x)\in \mathbb{Z}[q^{\pm\frac{1}{2}}][x]$ be  the  polynomials of arbitrary positive degree $d_1$ and $d_2$, respectively. Both $P_1(x)$ and $P_2(x)$ have  the form
 $$P(x)=1+q^{\frac{1}{2}}h_{1}(q^{\frac{1}{2}})x+qh_{2}(q^{\frac{1}{2}})x^{2}+\cdots+q^{\frac{d-1}{2}}h_{d-1}(q^{\frac{1}{2}})x^{d-1}
 +q^{\frac{d}{2}}x^{d}$$
 where $h_{i}(x)\in \mathbb{Z}[x^{\pm 1}]$
 satisfies that  $h_{i}(q^{\frac{1}{2}})=h_{d-i}(q^{\frac{1}{2}})$  for any $1\leq i\leq d$. Sometimes, we also make use of the notations $h_{0}(x)=h_{d}(x)=1$ without causing any confusion.

We inductively define $X_{k}\in\mathcal{F}$ for $k\in\mathbb{Z}$ from the following exchange relations
\begin{equation}\label{eq:exchange relation}
   X_{k-1}X_{k+1}=\begin{cases}
                   P_1(X_k) & \text{ if $k$ is even;}\\
                   P_2(X_k) & \text{ if $k$ is odd.}
                  \end{cases}
  \end{equation}

\begin{definition}
The  generalized quantum cluster algebra $\mathcal{A}_{q}(P_1,P_2)$ is defined to be the $\mathbb{Z}[q^{\pm\frac{1}{2}}]$-subalgebra of $\mathcal{F}$ generated by the set of all cluster variables $\{X_k\}_{k\in\mathbb{Z}}$.
\end{definition}
\begin{remark}
When $q=1$, $\mathcal{A}_{q}(P_1,P_2)$ is degenerated to the  generalized cluster algebra of rank two studied in \cite{rupel}.
\end{remark}

One can easily check the following result by induction.
\begin{lemma}\label{lemma1} In $\mathcal{A}_{q}(P_1,P_2)$, for any $k\in\mathbb{Z}$ we have that
$$X_{k}X_{k+1}=qX_{k+1}X_{k}.$$
\end{lemma}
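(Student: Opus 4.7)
The plan is to induct on $k$ in both directions, starting from the base case $X_1X_2 = qX_2X_1$ which is built into the presentation of the based quantum torus $\mathcal{T}$. Because the exchange relation $X_{k-1}X_{k+1}=P(X_k)$ relates three consecutive cluster variables, it is natural to propagate $q$-commutation from one adjacent pair to the next.

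For the forward step, I would assume $X_{k-1}X_k = qX_kX_{k-1}$ and show $X_kX_{k+1}=qX_{k+1}X_k$. The key algebraic fact is that the inductive hypothesis immediately gives
\[
X_k X_{k-1}^{-1} = q\, X_{k-1}^{-1} X_k,
\]
since conjugating $X_{k-1}X_k=qX_kX_{k-1}$ by $X_{k-1}^{-1}$ on the right and rearranging yields this rule (equivalently, $X_k^a X_{k-1}^b = q^{-ab}X_{k-1}^b X_k^a$). Next, from the exchange relation I would solve $X_{k+1}=X_{k-1}^{-1}P(X_k)$, where $P$ is $P_1$ or $P_2$ according to the parity of $k$. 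Since $X_k$ commutes with every power of itself, $X_k P(X_k)=P(X_k)X_k$, and therefore
\[
X_k X_{k+1} = X_k X_{k-1}^{-1} P(X_k) = q\, X_{k-1}^{-1} X_k P(X_k) = q\, X_{k-1}^{-1} P(X_k) X_k = q\, X_{k+1} X_k,
\]
as required.

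For the backward step, I would solve the exchange relation the other way, writing $X_{k-1}=P(X_k)X_{k+1}^{-1}$ and arguing symmetrically from the hypothesis $X_kX_{k+1}=qX_{k+1}X_k$ to deduce $X_{k-1}X_k=qX_kX_{k-1}$; the computation is identical up to relabeling. Together, these two inductions extend the $q$-commutation relation from the base pair to every consecutive pair $(X_k,X_{k+1})$ with $k\in\mathbb{Z}$.

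There is no real obstacle here, but the one subtlety to check is that the argument does not depend on the degree of $P_j$ or on the specific palindromic coefficients $h_i(q^{1/2})$: this is automatic because $X_k$ commutes with the whole polynomial $P(X_k)$ regardless of its coefficients, so the proof is uniform in the choice of $P_1$ and $P_2$.
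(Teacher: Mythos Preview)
Your proof is correct and follows exactly the approach the paper indicates: the paper simply says the lemma ``can easily [be checked] by induction,'' and what you have written is precisely that induction spelled out. The forward and backward steps are both valid, and your observation that the argument is insensitive to the degree or coefficients of $P_1,P_2$ is exactly the point.
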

The next result will be  useful for us to prove the quantum Laurent phenomenon.
\begin{lemma}\label{lemma2}
In $\mathcal{A}_{q}(P_1,P_2)$, we have that

\begin{enumerate}
      \item[(1)] If $k\in\mathbb{Z}$ is even, then we have
$$X_{k+1}X_{k-1}=1+q^{-\frac{1}{2}}h_{1}(q^{\frac{1}{2}})X_k+q^{-1}h_{2}(q^{\frac{1}{2}})X_k^{2}+
\cdots+q^{-\frac{d_1}{2}}h_{d_1}(q^{\frac{1}{2}})X_k^{d_1};$$
\item[(2)] if $k\in\mathbb{Z}$ is odd, then we have
$$X_{k+1}X_{k-1}=1+q^{-\frac{1}{2}}h'_{1}(q^{\frac{1}{2}})X_k+q^{-1}h'_{2}(q^{\frac{1}{2}})X_k^{2}+
\cdots+q^{-\frac{d_2}{2}}h'_{d_2}(q^{\frac{1}{2}})X_k^{d_2}.$$
\end{enumerate}
\end{lemma}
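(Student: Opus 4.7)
The plan is to derive the reversed-order product $X_{k+1}X_{k-1}$ directly from the exchange relation \eqref{eq:exchange relation} by conjugating $P_i(X_k)$ by $X_{k-1}^{-1}$, and to track the resulting $q$-shifts via Lemma~\ref{lemma1}.

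First I would establish the auxiliary commutation identity
\[
X_{k-1}^{-1} X_k^{j} X_{k-1} = q^{-j} X_k^{j} \quad (j \geq 0).
\]
Lemma~\ref{lemma1}, applied with index shifted by one, gives $X_{k-1}X_k = q X_k X_{k-1}$. Rearranging yields $X_{k-1}^{-1}X_k X_{k-1} = q^{-1}X_k$, and iterating produces the displayed formula for all $j \geq 0$.

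Next, I would use the exchange relation \eqref{eq:exchange relation} to solve $X_{k+1} = X_{k-1}^{-1} P_i(X_k)$, where $i=1$ when $k$ is even and $i=2$ when $k$ is odd. Multiplying on the right by $X_{k-1}$ and expanding the polynomial $P_i$ as given in the paragraph before Lemma~\ref{lemma1}, one obtains
\[
X_{k+1}X_{k-1} = X_{k-1}^{-1} P_i(X_k) X_{k-1} = \sum_{j=0}^{d_i} q^{j/2}\, h_j(q^{1/2})\cdot X_{k-1}^{-1} X_k^{j} X_{k-1},
\]
where the coefficients are $h_j$ for $i=1$ and $h'_j$ for $i=2$. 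Substituting the auxiliary identity turns each $q^{j/2}$ into $q^{j/2}\cdot q^{-j} = q^{-j/2}$, producing precisely the expression asserted by the lemma (together with $h_0(q^{1/2}) = h_{d_i}(q^{1/2}) = 1$ for the extreme terms).

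There is no serious obstacle: the argument is an iterated application of a single $q$-commutation. The only item requiring care is the parity bookkeeping, since the choice of polynomial $P_1$ or $P_2$ in \eqref{eq:exchange relation} switches with the parity of $k$; matching this against the two cases of the lemma is straightforward and the conjugation step itself is identical in both.
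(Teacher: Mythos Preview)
Your proposal is correct and follows essentially the same approach as the paper: both solve the exchange relation for $X_{k+1}=X_{k-1}^{-1}P_i(X_k)$, then use the commutation $X_{k-1}X_k=qX_kX_{k-1}$ from Lemma~\ref{lemma1} to pass $X_{k-1}^{-1}$ through each monomial $X_k^{j}$, picking up a factor $q^{-j}$, and finally multiply on the right by $X_{k-1}$. Your phrasing in terms of conjugation $X_{k-1}^{-1}(\cdot)X_{k-1}$ is just a cosmetic repackaging of the paper's term-by-term computation.
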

\begin{proof}
We only prove $(2)$, the proof of $(1)$ is similar. Assume that $k$ is odd. According to the exchange relation and Lemma \ref{lemma1}, we have that
\begin{align*}
   & X_{k+1}=X_{k-1}^{-1}P_2(X_k)\\
           =&X_{k-1}^{-1}+q^{\frac{1}{2}}h'_{1}(q^{\frac{1}{2}})X_{k-1}^{-1}X_{k}+q^{1}h'_{2}(q^{\frac{1}{2}})X_{k-1}^{-1}X_{k}^{2}+
\cdots+q^{\frac{d_2}{2}}h'_{d_2}(q^{\frac{1}{2}})X_{k-1}^{-1}X_{k}^{d_2}\\
           =&X_{k-1}^{-1}+q^{-\frac{1}{2}}h'_{1}(q^{\frac{1}{2}})X_{k}X_{k-1}^{-1}+q^{-1}h'_{2}(q^{\frac{1}{2}})X_{k}^{2}X_{k-1}^{-1}+
\cdots+q^{-\frac{d_2}{2}}h'_{d_2}(q^{\frac{1}{2}})X_{k}^{d_2}X_{k-1}^{-1}.\\
   \end{align*}
   It follows that
   $$X_{k+1}X_{k-1}=1+q^{-\frac{1}{2}}h'_{1}(q^{\frac{1}{2}})X_k+q^{-1}h'_{2}(q^{\frac{1}{2}})X_k^{2}+
\cdots+q^{-\frac{d_2}{2}}h'_{d_2}(q^{\frac{1}{2}})X_k^{d_2},$$
which proves the desired result.
\end{proof}
 It will be convenient to introduce the following notations:
\begin{equation}\label{eq:exchange relation}
   X_{k+1}X_{k-1}=\begin{cases}
                   \widehat{P_1(X_k)} & \text{ if $k$ is even;}\\
                   \widehat{P_2(X_k)} & \text{ if $k$ is odd.}
                  \end{cases}
  \end{equation}
Note that $\widehat{P_1(X_k)}$ and $\widehat{P_2(X_k)}$ have the explicit expressions respectively obtained in Lemma \ref{lemma2}.

Now we are ready to prove the quantum Laurent phenomenon of  rank two quantum generalized cluster algebra  by using the method developed in \cite{rupel}.
\begin{theorem}(Quantum Laurent phenomenon)\label{laurent}
The generalized quantum cluster algebra $\mathcal{A}_{q}(P_1,P_2)$ is a
subalgebra of $\mathbb{Z}[q^{\pm\frac{1}{2}}][X_{m}^{\pm 1},X_{m+1}^{\pm 1}]$ for any $m\in\mathbb{Z}$.
\end{theorem}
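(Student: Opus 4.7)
\medskip

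\noindent
The plan is to adapt the strategy of \cite{rupel} for the classical generalized cluster algebra of rank two to the quantum setting. By the index-shift symmetry of the exchange relations and of Lemma \ref{lemma1}, it suffices to handle the case $m=0$, i.e.\ to show that every $X_k$ lies in $\mathcal{L}:=\mathbb{Z}[q^{\pm\frac{1}{2}}][X_0^{\pm 1},X_1^{\pm 1}]$. Moreover the reflection $k\mapsto 1-k$ swaps $P_1$ and $P_2$ while preserving the family $\{X_k\}_{k\in\mathbb{Z}}$, so it is enough to treat the forward direction $k\geq 2$ by induction on $k$.

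\medskip

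\noindent
The base cases $X_2=X_0^{-1}P_1(X_1)$ and $X_3=X_1^{-1}P_2(X_2)$ lie in $\mathcal{L}$ immediately, since $\mathcal{L}$ is closed under multiplication, under scalars in $\mathbb{Z}[q^{\pm\frac{1}{2}}]$, and under multiplication by $X_0^{\pm 1}$ and $X_1^{\pm 1}$. The first nontrivial case is $X_4=X_2^{-1}P_1(X_3)$, where $X_2^{-1}$ is no longer Laurent in $X_0,X_1$. For the general inductive step, assume $X_0,X_1,\ldots,X_K\in\mathcal{L}$ for some $K\geq 3$; we must show $X_{K+1}=X_{K-1}^{-1}P(X_K)\in\mathcal{L}$, where $P$ is $P_1$ or $P_2$ according to the parity of $K$. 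The crux is the divisibility
\[
P(X_K)\in X_{K-1}\cdot\mathcal{L},
\]
which, once established, produces $X_{K+1}\in\mathcal{L}$ directly.

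\medskip

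\noindent
To prove the divisibility, I would substitute the previous exchange $X_K=X_{K-2}^{-1}\widetilde{P}(X_{K-1})$ (where $\widetilde{P}$ is the opposite polynomial) into the expansion $P(X_K)=\sum_{r=0}^{d}q^{r/2}h_{r}(q^{1/2})X_K^r$. Each resulting monomial can be put in normal form in $X_{K-2}^{-1}$ and $X_{K-1}$ using the $q$-commutation from Lemma \ref{lemma1}, and then the $r$-th and $(d-r)$-th summands paired using the palindromic identity $h_r(q^{1/2})=h_{d-r}(q^{1/2})$, together with the companion form of the exchange provided by Lemma \ref{lemma2}. This pairing yields a systematic cancellation that extracts an explicit factor of $X_{K-1}$, giving $P(X_K)=X_{K-1}\cdot R$ for some $R\in\mathbb{Z}[q^{\pm\frac{1}{2}}][X_{K-2}^{\pm 1},X_{K-1}^{\pm 1}]$. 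By the inductive hypothesis $X_{K-2},X_{K-1}\in\mathcal{L}$, we then conclude $R\in\mathcal{L}$, whence $X_{K+1}=R\in\mathcal{L}$.

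\medskip

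\noindent
The main obstacle is the quantum bookkeeping in the divisibility calculation: each reordering of $X_{K-2}^{-1}$ past powers of $\widetilde{P}(X_{K-1})$ produces a power of $q$, and the cumulative contribution of all such prefactors must align precisely with the palindromic pairing $h_r=h_{d-r}$ so that the quotient $R$ has coefficients in $\mathbb{Z}[q^{\pm\frac{1}{2}}]$ rather than merely in $\mathbb{Q}(q^{1/2})$. This is the quantum counterpart of the cancellation exploited classically in \cite{rupel}, and the palindromic hypothesis imposed on $P_1$ and $P_2$ is precisely what guarantees that it survives the passage from commutative to quantum variables; without this symmetry the Laurent phenomenon would break down in the quantum setting.
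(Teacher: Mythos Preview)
Your inductive step contains a genuine gap. You propose to factor $P(X_K)=X_{K-1}\cdot R$ with $R\in\mathbb{Z}[q^{\pm 1/2}][X_{K-2}^{\pm 1},X_{K-1}^{\pm 1}]$, and then deduce from the inductive hypothesis $X_{K-2},X_{K-1}\in\mathcal{L}$ that $R\in\mathcal{L}$. But this implication fails: the hypothesis gives you $X_{K-2},X_{K-1}\in\mathcal{L}$, not their inverses (for instance $X_2^{-1}$ is certainly not a Laurent polynomial in $X_0,X_1$), so a Laurent expression in $X_{K-2},X_{K-1}$ need not land in $\mathcal{L}$. Worse, the factorization $P(X_K)=X_{K-1}X_{K+1}$ is precisely the exchange relation, so your substitution merely rewrites $X_{K+1}$ as a Laurent polynomial in $X_{K-2},X_{K-1}$; this is automatic and neither uses palindromicity nor advances the induction.

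The paper's argument (following \cite{rupel}) avoids this trap by proving a sharper statement: each $X_k$ is a \emph{polynomial}, with nonnegative exponents, in the four consecutive variables $X_{k+1},X_{k+2},X_{k+3},X_{k+4}$. One computes $X_{k+1}^{d_1}X_{k+4}$ using $X_{k+4}=\widehat{P_1(X_{k+3})}X_{k+2}^{-1}$ and subtracts $q^{-d_1/2}X_k=q^{-d_1/2}P_1(X_{k+1})X_{k+2}^{-1}$; the palindromic identity $h_i=h_{d_1-i}$ aligns the two sums so that the difference becomes
\[
\sum_{i=0}^{d_1} q^{-i/2}h_i(q^{1/2})\,X_{k+1}^{\,d_1-i}\bigl(X_{k+1}^{\,i}X_{k+3}^{\,i}-1\bigr)X_{k+2}^{-1}.
\]
The decisive divisibility is then $X_{k+1}^{\,i}X_{k+3}^{\,i}-1\in X_{k+2}\,\mathbb{Z}[q^{\pm 1/2}][X_{k+2}]$, which holds because $X_{k+1}X_{k+3}=P_2(X_{k+2})$ is a polynomial in $X_{k+2}$ with constant term $1$ (and $X_{k+1},X_{k+3}$ both $q$-commute with $X_{k+2}$, so the same holds for all powers). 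This kills the lone $X_{k+2}^{-1}$ and expresses $X_k$ as a genuine polynomial in $X_{k+1},X_{k+2},X_{k+3},X_{k+4}$, whence the induction closes with no stray inverses.
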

\begin{proof}
Firstly for any $k\in\mathbb{Z},$ we prove that
$$X_{k}\in \mathbb{Z}[q^{\pm\frac{1}{2}}][X_{k+1},X_{k+2},X_{k+3},X_{k+4}].$$
We only prove the statement for  the case when $k$ is odd, the even case can be proved similarly.
Assume that $k$ is odd. Now by using Lemma \ref{lemma2}, we compute the element $X_{k+1}^{d_1}X_{k+4}$ as follows:
   \begin{align*}
   & X_{k+1}^{d_1}X_{k+4} \\
    =&X_{k+1}^{d_1}\widehat{P_1(X_{k+3})}X_{k+2}^{-1}\\
    =&(X_{k+1}^{d_1}\widehat{P_1(X_{k+3})}-q^{-\frac{d_1}{2}}P_1(X_{k+1}))X_{k+2}^{-1}
    +q^{-\frac{d_1}{2}}P_1(X_{k+1})X_{k+2}^{-1}\\
    =&(\sum_{i=0}^{d_1}X_{k+1}^{d_1}q^{-\frac{i}{2}}h_{i}(q^{\frac{1}{2}})X_{k+3}^{i}-q^{-\frac{d_1}{2}}\sum_{i=0}^{d_1}
    q^{\frac{d_1-i}{2}}h_{d_1-i}(q^{\frac{1}{2}})X_{k+1}^{d_1-i})X_{k+2}^{-1}+q^{-\frac{d_1}{2}}X_{k}\\
    =&[\sum_{i=0}^{d_1}q^{-\frac{i}{2}}h_{i}(q^{\frac{1}{2}})X_{k+1}^{d_1-i}(X_{k+1}^{i}X_{k+3}^{i}-1)]X_{k+2}^{-1}+q^{-\frac{d_1}{2}}X_{k}.
   \end{align*}
Note that $X_{k+1}X_{k+3}=P_2(X_{k+2})$ which has $1$ as the constant term, so by Lemma \ref{lemma1}, we can deduce that
$$X_{k+1}^{i}X_{k+3}^{i}-1 \in X_{k+2}\mathbb{Z}[q^{\pm\frac{1}{2}}][X_{k+2}], \text{ for any } 0\leq i\leq d_1.$$ Thus we have that
$$X_{k}\in \mathbb{Z}[q^{\pm\frac{1}{2}}][X_{k+1},X_{k+2},X_{k+3},X_{k+4}].$$

By induction, we can deduce that $X_{k}\in \mathbb{Z}[q^{\pm\frac{1}{2}}][X_{m+1},X_{m+2},X_{m+3},X_{m+4}]$ for any $m\geq k.$ Similarly, we have
$X_{k}\in \mathbb{Z}[q^{\pm\frac{1}{2}}][X_{m-4},X_{m-3},X_{m-2},X_{m-1}]$ for any $k\geq m.$  It follows that
$X_{k}\in \mathbb{Z}[q^{\pm\frac{1}{2}}][X_{m-1},X_{m},X_{m+1},X_{m+2}]$ for any $m\in\mathbb{Z}.$ Note that $X_{m-1}$ and $X_{m+2}$ belong to
$\mathbb{Z}[q^{\pm\frac{1}{2}}][X_{m}^{\pm 1},X_{m+1}^{\pm 1}]$, hence the proof is finished.
\end{proof}
Let $X\rightarrow \overline{X}$ be the $\mathbb{Z}-$linear bar-involution of the based quantum torus $\mathcal{T}$ (see \cite{berzel}) satisfying
$$\overline{q^{\frac{r}{2}}X(a_1,a_2)}=q^{-\frac{r}{2}}X(a_1,a_2), \ \ \ r,a_1,a_2\in \mathbb{Z},$$
where the notation $X(a_1,a_2):=q^{-\frac{a_1a_2}{2}}X_{1}^{a_1}X_{2}^{a_2}$.

In order to prove the  bar-invariance of quantum generalized cluster variables, we need more conditions on coefficients in $P(x)$ as stated in the next result.
\begin{proposition}
If the coefficients in $P(x)$ satisfies  $\overline{h_{i}(q^{\frac{1}{2}})}=h_{i}(q^{\frac{1}{2}})$ for each $1\leq i\leq d$,  then all quantum generalized cluster variables of $\mathcal{A}_{q}(P_1,P_2)$ are invariant under the bar-involution.
\end{proposition}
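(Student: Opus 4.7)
The plan is to prove bar-invariance of every $X_k$ by a two-sided induction on $k$ starting from the initial cluster $\{X_1, X_2\}$, using the exchange relations together with Lemma \ref{lemma2} to convert $\overline{P_i(X_k)}$ into an expression involving $X_{k+1}X_{k-1}$ rather than $X_{k-1}X_{k+1}$.

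First I would record the basic property that the bar-involution, although defined $\mathbb{Z}$-linearly on the distinguished basis $\{q^{r/2}X(a_1,a_2)\}$, is automatically a $\mathbb{Z}$-algebra \emph{anti}-automorphism of the based quantum torus $\mathcal{T}$, and hence extends to an anti-automorphism of the skew-field of fractions $\mathcal{F}$. This is the standard observation: writing $X(a_1,a_2)X(b_1,b_2)=q^{\frac12\Lambda((a,b))}X(a+b)$ and applying the defining rule on both sides shows $\overline{XY}=\overline{Y}\,\overline{X}$ for basis elements, and $\mathbb{Z}$-linearity extends this to all of $\mathcal{T}$. The base step is immediate, since $X_1=X(1,0)$ and $X_2=X(0,1)$ carry no power of $q^{1/2}$ and are therefore fixed.

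For the inductive step, assume $\overline{X_{k-1}}=X_{k-1}$ and $\overline{X_k}=X_k$; I want to show $\overline{X_{k+1}}=X_{k+1}$. Applying the anti-automorphism to the exchange relation (writing $P$ for whichever of $P_1, P_2$ is appropriate to the parity of $k$) yields
\begin{equation*}
\overline{X_{k+1}}\cdot X_{k-1}=\overline{X_{k-1}X_{k+1}}=\overline{P(X_k)}.
\end{equation*}
Now use the hypothesis $\overline{h_i(q^{1/2})}=h_i(q^{1/2})$ and the inductive bar-invariance of $X_k$ (so $\overline{X_k^i}=X_k^i$) to compute
\begin{equation*}
\overline{P(X_k)}=\sum_{i=0}^{d}q^{-i/2}h_i(q^{1/2})X_k^i,
\end{equation*}
which is precisely the expression $\widehat{P(X_k)}$ identified in Lemma \ref{lemma2} as $X_{k+1}X_{k-1}$. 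Canceling the invertible element $X_{k-1}$ on the right gives $\overline{X_{k+1}}=X_{k+1}$. The argument going in the negative direction is identical, using the exchange relation $X_{k+1}X_{k-1}=\widehat{P(X_k)}$ to solve for $X_{k-1}$ and showing $\overline{X_{k-2}}=X_{k-2}$ from $\overline{X_{k-1}}=X_{k-1}$ and $\overline{X_k}=X_k$.

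The only nontrivial point is the matching of $\overline{P(X_k)}$ with $\widehat{P(X_k)}$, and this is exactly where the hypothesis $\overline{h_i(q^{1/2})}=h_i(q^{1/2})$ enters: without it, the coefficients would acquire extra powers of $q^{1/2}$ under bar and one could not recognize the right-hand side as $X_{k+1}X_{k-1}$. Everything else is bookkeeping with the anti-automorphism property and the lemmas already proved. I expect no serious obstacle; the main subtlety is remembering to apply bar as an anti-homomorphism, so that the order of $X_{k-1}$ and $X_{k+1}$ gets reversed and the appropriate form of the exchange relation from Lemma \ref{lemma2} is used.
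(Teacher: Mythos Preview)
Your proof is correct and follows essentially the same line as the paper's: both argue by two-sided induction on $k$, apply the bar-involution as an anti-automorphism to the exchange relation, and use Lemma~\ref{lemma2} together with the hypothesis $\overline{h_i(q^{1/2})}=h_i(q^{1/2})$ to identify $\overline{P(X_k)}$ with $X_{k+1}X_{k-1}$ before cancelling $X_{k-1}$. The only minor difference is that you extend the bar-involution to the skew-field $\mathcal{F}$ so that $\overline{X_k}$ makes sense a priori, whereas the paper instead invokes the Laurent phenomenon (Theorem~\ref{laurent}) to ensure all $X_k$ already lie in $\mathcal{T}$; both devices serve the same purpose and either is adequate.
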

\begin{proof}
By Lemma \ref{lemma2} and the condition $h_{i}(q^{\frac{1}{2}})=h_{i}(q^{-\frac{1}{2}})$ for each $1\leq i\leq d$, we have
\begin{equation}\label{eq:exchange relation2}
   X_{k+1}X_{k-1}=\begin{cases}
                   \overline{P_1(X_k)} & \text{ if $k$ is even;}\\
                   \overline{P_2(X_k)} & \text{ if $k$ is odd.}
                  \end{cases}
  \end{equation}
  Using the bar-involution to the exchange relation, and noting the quantum Laurent phenomenon established in Theorem \ref{laurent}, we can deduce
  \begin{equation}\label{eq:exchange relation2}
   \overline{X_{k+1}}\ \overline{X_{k-1}}=\begin{cases}
                   \overline{P_1(X_k)} & \text{ if $k$ is even;}\\
                   \overline{P_2(X_k)} & \text{ if $k$ is odd.}
                  \end{cases}
  \end{equation}
  Hence the proof can be completed by induction on $k$.
\end{proof}
As a direct corollary, we obtain the following result.
\begin{corollary}
The quantum generalized cluster algebra $\mathcal{A}_{q}(P_1,P_2)$ is invariant under the bar-involution.
\end{corollary}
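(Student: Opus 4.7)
The corollary is essentially a direct consequence of the preceding proposition together with the definitions. My plan is to observe that $\mathcal{A}_q(P_1,P_2)$ is generated, as a $\mathbb{Z}[q^{\pm\frac{1}{2}}]$-subalgebra of $\mathcal{F}$, by the family $\{X_k\}_{k\in\mathbb{Z}}$ of cluster variables, and then check that the bar-involution stabilizes both the coefficient ring and the set of generators.

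The first step is to recall how the bar-involution acts on the coefficient ring. By definition, $\overline{q^{r/2}X(a_1,a_2)}=q^{-r/2}X(a_1,a_2)$, so the restriction of the bar-involution to the scalar subring sends $q^{r/2}\mapsto q^{-r/2}$. In particular, $\mathbb{Z}[q^{\pm\frac{1}{2}}]$ is mapped into itself. The second step is to note that bar is an anti-automorphism of the based quantum torus $\mathcal{T}$, hence of its skew-field of fractions $\mathcal{F}$; this is standard and is implicitly what is used in \cite{berzel}. Thus for any elements $Y_1,\ldots,Y_\ell\in\mathcal{F}$ and $c\in\mathbb{Z}[q^{\pm\frac{1}{2}}]$, one has $\overline{cY_1\cdots Y_\ell}=\overline{c}\,\overline{Y_\ell}\cdots\overline{Y_1}$.

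The third step is to combine these ingredients. By the previous proposition, under the hypothesis on the coefficients of $P_1$ and $P_2$, every cluster variable $X_k$ satisfies $\overline{X_k}=X_k$. An arbitrary element $f\in\mathcal{A}_q(P_1,P_2)$ is a finite $\mathbb{Z}[q^{\pm\frac{1}{2}}]$-linear combination of monomials in the $X_k$'s, say $f=\sum_i c_i\,X_{k_{i,1}}X_{k_{i,2}}\cdots X_{k_{i,n_i}}$. Applying the bar-involution gives
\[
\overline{f}=\sum_i \overline{c_i}\,\overline{X_{k_{i,n_i}}}\cdots\overline{X_{k_{i,1}}}=\sum_i \overline{c_i}\,X_{k_{i,n_i}}\cdots X_{k_{i,1}},
\]
which is again a $\mathbb{Z}[q^{\pm\frac{1}{2}}]$-linear combination of monomials in the $X_k$'s, hence lies in $\mathcal{A}_q(P_1,P_2)$. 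Therefore $\mathcal{A}_q(P_1,P_2)$ is stable under the bar-involution, proving the corollary.

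There is no real obstacle here; the only mildly subtle point is to be explicit about the fact that bar is an anti-automorphism, so that the reversed products appearing on the right-hand side still lie in the subalgebra generated by the $X_k$'s. Since the generating set is closed under arbitrary finite products in any order, this causes no problem.
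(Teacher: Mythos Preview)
Your argument is correct and matches the paper's treatment: the paper simply records this as a ``direct corollary'' of the preceding proposition without further proof, and what you have written is exactly the spelled-out version of that immediate deduction (bar is an anti-automorphism fixing each generator $X_k$ and stabilizing $\ZZ[q^{\pm\frac{1}{2}}]$, hence stabilizes the subalgebra they generate).
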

According to the proof of Theorem \ref{laurent}, we have that
$$\mathcal{A}_{q}(P_1,P_2)=\mathbb{Z}[q^{\pm\frac{1}{2}}][X_{m-1},X_{m},X_{m+1},X_{m+2}]$$ for any $m\in\mathbb{Z}.$
This is an analogue  of the fact that the classical cluster algebra of rank two is equal to its lower bound. The following definition is an analogue of Definition 1.15 in \cite{bfz}.
\begin{definition}
A standard monomial in the quantum generalized cluster variables $\{X_0,X_1,X_2,X_3\}$ is an element of the form
$X_1^{a_1}X_2^{a_2}X_3^{a'_1}X_0^{a'_2}$, where all exponents are nonnegative integers with $a_1a'_1=0$ and $a_2a'_2=0$.
\end{definition}
By the exchange relations among the elements $\{X_0,X_1,X_2,X_3\}$ and the equation $\mathcal{A}_{q}(P_1,P_2)=\mathbb{Z}[q^{\pm\frac{1}{2}}][X_{0},X_{1},X_{2},X_{3}]$, we can easily deduce the following result.
\begin{proposition}
The set of all standard monomials in the quantum generalized cluster variables $\{X_0,X_1,X_2,X_3\}$ is a $\mathbb{Z}[q^{\pm\frac{1}{2}}]-$basis
of $\mathcal{A}_{q}(P_1,P_2).$
\end{proposition}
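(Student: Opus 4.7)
The plan is to prove spanning and linear independence separately, adapting the classical argument for rank-two cluster algebras \cite{bfz} to the quantum setting.

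For spanning, from the proof of Theorem \ref{laurent} we have $\mathcal{A}_{q}(P_1,P_2) = \mathbb{Z}[q^{\pm\frac{1}{2}}][X_0, X_1, X_2, X_3]$, so every element is a $\mathbb{Z}[q^{\pm\frac{1}{2}}]$-linear combination of (non-commutative) monomials in $X_0, X_1, X_2, X_3$. The key rewriting rules I would apply are the adjacent commutations $X_i X_{i+1} = q X_{i+1} X_i$ from Lemma \ref{lemma1}, the exchange relations $X_1 X_3 = P_1(X_2)$ and $X_0 X_2 = P_2(X_1)$ together with the ``bar'' versions $X_3 X_1 = \widehat{P_1(X_2)}$, $X_2 X_0 = \widehat{P_2(X_1)}$ from Lemma \ref{lemma2}, and a derived identity of the form $X_0 X_3 = q^{-1} X_3 X_0 + R(X_1, X_2)$ for a polynomial $R \in \mathbb{Z}[q^{\pm\frac{1}{2}}][X_1, X_2]$ obtained by expanding both sides via $X_3 = X_1^{-1} P_1(X_2)$ and $X_0 = P_2(X_1) X_2^{-1}$ and noting that $P_2(X_1) P_1(X_2) - P_1(X_2) P_2(X_1)$ is divisible by $X_1 X_2$ in the quantum torus. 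An induction on the lexicographic pair (number of forbidden subwords $X_1 X_3$ or $X_0 X_2$, number of $X_0$'s preceding $X_3$'s) using these rules reduces any monomial to a linear combination of standard monomials.

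For linear independence, the quantum Laurent phenomenon embeds $\mathcal{A}_{q}(P_1,P_2)$ into the quantum torus $\mathcal{T} = \mathbb{Z}[q^{\pm\frac{1}{2}}][X_1^{\pm 1}, X_2^{\pm 1}]$, which is free on the basis $\{X(i,j) : (i,j) \in \mathbb{Z}^2\}$. The support in $\mathbb{Z}^2$ of a standard monomial $X_1^{a_1} X_2^{a_2} X_3^{a_1'} X_0^{a_2'}$, computed via $X_3 = X_1^{-1} P_1(X_2)$ and $X_0 = P_2(X_1) X_2^{-1}$ as a Minkowski sum of the factor supports, is the rectangle (or lower-dimensional degeneration) with bottom-left corner $(a_1 - a_1', a_2 - a_2')$ and horizontal, vertical extents $a_2' d_2$, $a_1' d_1$ respectively. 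The conditions $a_1 a_1' = 0$ and $a_2 a_2' = 0$ place these rectangles in four different regions of $\mathbb{Z}^2$; in particular, the bottom-left vertex $(-a_1', -a_2')$ of a standard monomial with both $a_1', a_2' > 0$ lies in the open third quadrant, which is inaccessible to the three other cases. Ordering the monomials with $a_1', a_2' > 0$ by $a_1' + a_2'$ and the remaining cases by analogous extremal-vertex invariants, one extracts from a vanishing combination $\sum c_\alpha m_\alpha = 0$ the condition that the coefficient of the outermost Laurent monomial $X(-a_1', -a_2')$ vanishes, forcing the corresponding $c_\alpha = 0$, and then iterates inward.

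The main obstacle is the derived relation $X_0 X_3 = q^{-1} X_3 X_0 + R(X_1, X_2)$ in the spanning step: this is absent in the classical case (where $X_0$ and $X_3$ commute) and requires verifying that $R$ is genuinely a polynomial rather than a Laurent polynomial. The crucial check is that $1 - q^{-ij}$ vanishes at $i = 0$ or $j = 0$, killing all terms in $R$ that would involve $X_1^{-1}$ or $X_2^{-1}$; once this is in place, the rewriting system terminates and the spanning argument closes.
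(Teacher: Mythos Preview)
Your proposal is correct and is precisely the argument the paper is gesturing at; the paper itself gives no proof beyond the sentence ``By the exchange relations among the elements $\{X_0,X_1,X_2,X_3\}$ and the equation $\mathcal{A}_{q}(P_1,P_2)=\mathbb{Z}[q^{\pm\frac{1}{2}}][X_{0},X_{1},X_{2},X_{3}]$, we can easily deduce the following result,'' so you have in effect supplied the missing details. Your verification that $X_0 X_3 - q^{-1} X_3 X_0 \in \mathbb{Z}[q^{\pm\frac{1}{2}}][X_1,X_2]$ via the vanishing of $1-q^{-ij}$ at $i=0$ or $j=0$ is exactly the right observation for the spanning step.

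One simplification: in the linear-independence step you do not need the four-case split. Using the lexicographic order on $\mathbb{Z}^2$, the minimal term of $X_1^{a_1} X_2^{a_2} X_3^{a_1'} X_0^{a_2'}$ in the quantum torus is $X(a_1 - a_1',\, a_2 - a_2')$ with unit coefficient, because the lex-minimal term of each factor ($X_1^{a_1}$, $X_2^{a_2}$, $X_3^{a_1'} = X_1^{-a_1'} + \cdots$, $X_0^{a_2'} = X_2^{-a_2'} + \cdots$) contributes uniquely. Since the map $(a_1,a_2,a_1',a_2') \mapsto (a_1 - a_1',\, a_2 - a_2')$ is a bijection from standard-monomial exponents to $\mathbb{Z}^2$ under the constraints $a_1 a_1' = a_2 a_2' = 0$, distinct standard monomials have distinct lex-minimal terms, and independence follows immediately.
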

\begin{remark}
The $\mathbb{Z}[q^{\pm\frac{1}{2}}]-$basis consisting of all standard monomials is not invariant under the bar-involution. How to construct various
bar-invariant positive $\mathbb{Z}[q^{\pm\frac{1}{2}}]-$bases of $\mathcal{A}_{q}(P_1,P_2)$ does deserve a further study.
\end{remark}

We denote by $\mathcal{U}(P_1,P_2)\subset \mathcal{F}$ the $\mathbb{Z}[q^{\pm\frac{1}{2}}]-$subalgebra of $\mathcal{F}$
given by
$$\mathcal{U}(P_1,P_2)=\mathbb{Z}[q^{\pm\frac{1}{2}}][X_{1}^{\pm 1},X_{2}^{\pm 1}]\cap \mathbb{Z}[q^{\pm\frac{1}{2}}][X_{2}^{\pm 1},X_{3}^{\pm 1}]
\cap \mathbb{Z}[q^{\pm\frac{1}{2}}][X_{0}^{\pm 1},X_{1}^{\pm 1}].$$

Then we have a stronger version of the quantum Laurent phenomenon.
\begin{theorem} For generalized quantum cluster algebras of rank two, we have that
$$\mathcal{A}_{q}(P_1,P_2)=\bigcap_{k\in\mathbb{Z}}\mathbb{Z}[q^{\pm\frac{1}{2}}][X_{k}^{\pm 1},X_{k+1}^{\pm 1}]=\mathcal{U}(P_1,P_2).$$
\end{theorem}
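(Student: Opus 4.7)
The plan is to establish the chain
\[
\mathcal{A}_{q}(P_1,P_2) \;\subseteq\; \bigcap_{k\in\mathbb{Z}} \mathbb{Z}[q^{\pm\frac{1}{2}}][X_{k}^{\pm 1},X_{k+1}^{\pm 1}] \;\subseteq\; \mathcal{U}(P_1,P_2) \;\subseteq\; \mathcal{A}_{q}(P_1,P_2),
\]
which forces all three algebras to coincide. The first inclusion is exactly Theorem \ref{laurent}, and the middle one is immediate from the definitions: the right-hand side $\mathcal{U}(P_1,P_2)$ intersects only the three Laurent-polynomial rings with index pairs $(0,1), (1,2), (2,3)$, whereas the central object intersects over every $k\in\mathbb{Z}$. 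The substantive content is the reverse inclusion $\mathcal{U}(P_1,P_2) \subseteq \mathcal{A}_{q}(P_1,P_2)$.

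For this, I would first recall from the proof of Theorem \ref{laurent} (applied with $m=1$) that $\mathcal{A}_{q}(P_1,P_2) = \mathbb{Z}[q^{\pm\frac{1}{2}}][X_{0},X_{1},X_{2},X_{3}]$, and that the standard monomials in these four variables provide a $\mathbb{Z}[q^{\pm\frac{1}{2}}]$-basis. Given an arbitrary $Y\in\mathcal{U}(P_1,P_2)$, expand $Y = \sum c_{a,b}\, X(a,b)$ in the toric basis $\{X(a,b) = q^{-ab/2} X_{1}^{a} X_{2}^{b} : (a,b) \in \mathbb{Z}^2\}$ of the quantum torus on $(X_{1}, X_{2})$, and let $S \subset \mathbb{Z}^2$ be its finite support. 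The hypothesis that $Y$ also be Laurent in $(X_0,X_1)$, once reinterpreted through $X_0 = X_2^{-1} P_2(X_1)$ and the commutation $X_0 X_2 = q X_2 X_0$ furnished by Lemma \ref{lemma1}, constrains $S$ to the image in $(X_1,X_2)$-coordinates of the supports of the monomials $X_1^{a_1} X_0^{a_2'}$ with $a_1, a_2' \geq 0$. The third condition, $Y \in \mathbb{Z}[q^{\pm\frac{1}{2}}][X_{2}^{\pm 1}, X_{3}^{\pm 1}]$, analogously restricts $S$ through $X_3 = X_1^{-1} P_1(X_2)$.

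Intersecting these three constraints, $S$ is covered by the $(X_1, X_2)$-supports of the three families of standard monomials $X_1^{a_1} X_2^{a_2}$, $X_1^{a_1} X_0^{a_2'}$, and $X_3^{a_1'} X_2^{a_2}$. One then iteratively extracts standard monomials from $Y$ by working along the extremal lattice points of $S$: at each step subtract a $\mathbb{Z}[q^{\pm\frac{1}{2}}]$-multiple of a standard monomial whose leading $(X_1,X_2)$-support coincides with a chosen extremal point of the current remainder. Linear independence of the standard monomials guarantees uniqueness of the coefficient at each stage, and the reduction terminates after finitely many steps because $S$ is finite, exhibiting $Y$ as an element of $\mathcal{A}_{q}(P_1,P_2)$.

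The main obstacle is the bookkeeping of the $q$-powers introduced by the exchange relations and by reorderings $X_i X_j = q X_j X_i$: each such manipulation produces a half-integer power of $q$, and the crux is verifying that the coefficients extracted during the inductive peeling-off remain in $\mathbb{Z}[q^{\pm\frac{1}{2}}]$ and do not drift into $\mathbb{Q}(q^{1/2})$. This is a quantum refinement of the support-comparison argument used by Rupel in the classical case \cite{rupel}; once the tracking of $q$-factors is checked, the rest of the structure of the proof transports verbatim.
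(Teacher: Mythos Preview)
Your chain of inclusions and the reduction to $\mathcal{U}(P_1,P_2)\subseteq\mathcal{A}_q(P_1,P_2)$ match the paper exactly. From there the paper takes a different, more structural route: it proves four intersection identities in the style of \cite{bfz},
\begin{enumerate}
\item[(1)] $\mathbb{Z}[q^{\pm\frac{1}{2}}][X_{m}^{\pm 1},X_{m+1}^{\pm 1}]\cap \mathbb{Z}[q^{\pm\frac{1}{2}}][X_{m+1}^{\pm 1},X_{m+2}^{\pm 1}]=\mathbb{Z}[q^{\pm\frac{1}{2}}][X_{m},X_{m+1}^{\pm 1},X_{m+2}]$,
\item[(2)--(4)] analogous identities culminating in $\mathbb{Z}[q^{\pm\frac{1}{2}}][X_{m},X_{m+1}^{\pm 1},X_{m+2}]\cap \mathbb{Z}[q^{\pm\frac{1}{2}}][X_{m-1},X_{m}^{\pm 1},X_{m+1}]=\mathbb{Z}[q^{\pm\frac{1}{2}}][X_{m-1},X_{m},X_{m+1},X_{m+2}]$,
\end{enumerate}
and then chains them to get $\bigcap_{k=m-1}^{m+1}\mathbb{Z}[q^{\pm\frac{1}{2}}][X_{k}^{\pm 1},X_{k+1}^{\pm 1}]=\mathbb{Z}[q^{\pm\frac{1}{2}}][X_{m-1},X_{m},X_{m+1},X_{m+2}]=\mathcal{A}_q(P_1,P_2)$. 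This avoids any support-tracking in the toric basis and never needs the standard-monomial basis explicitly.

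Your alternative support-peeling idea could in principle be made to work, but the version you wrote has concrete gaps. First, Lemma~\ref{lemma1} gives $X_kX_{k+1}=qX_{k+1}X_k$ for \emph{consecutive} indices; there is no relation $X_0X_2=qX_2X_0$, and indeed $X_0X_2=P_2(X_1)$ while $X_2X_0=\widehat{P_2(X_1)}$, so $X_0$ and $X_2$ do not $q$-commute. Second, the claim that membership in $\mathbb{Z}[q^{\pm\frac{1}{2}}][X_0^{\pm1},X_1^{\pm1}]$ forces the $(X_1,X_2)$-support into the image of monomials $X_1^{a_1}X_0^{a_2'}$ with $a_1,a_2'\geq 0$ is false as stated: Laurent in $(X_0,X_1)$ allows arbitrary integer exponents, and the restriction to nonnegative ones only emerges after \emph{combining} all three Laurent conditions, which you have not argued. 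Third, you have dropped the fourth family $X_3^{a_1'}X_0^{a_2'}$ of standard monomials, without which the coverage claim cannot hold. Finally, the peeling step---identifying a standard monomial whose leading support matches an extremal point of $S$ and showing the remainder stays in $\mathcal{U}(P_1,P_2)$---requires an argument you have not supplied; in particular you must check that distinct standard monomials have distinct leading $(X_1,X_2)$-exponents. The paper's intersection-identity approach sidesteps all of this bookkeeping.
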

\begin{proof}
By Theorem \ref{laurent}, we have that $$\mathcal{A}_{q}(P_1,P_2)\subseteq\bigcap_{k\in\mathbb{Z}}\mathbb{Z}[q^{\pm\frac{1}{2}}][X_{k}^{\pm 1},X_{k+1}^{\pm 1}]\subseteq\mathcal{U}(P_1,P_2).$$ So we only need to prove that $\mathcal{U}(P_1,P_2)\subseteq \mathcal{A}_{q}(P_1,P_2)$.
In fact, one can prove a  stronger result as follows
 $$\bigcap_{k=m-1}^{m+1}\mathbb{Z}[q^{\pm\frac{1}{2}}][X_{k}^{\pm 1},X_{k+1}^{\pm 1}]= \mathcal{A}_{q}(P_1,P_2).$$
This can be finished by showing  the following four equations for any $m\in\mathbb{Z}$ which are similar to those in \cite{berzel}:
\begin{enumerate}
      \item[(1)]  $\mathbb{Z}[q^{\pm\frac{1}{2}}][X_{m}^{\pm 1},X_{m+1}^{\pm 1}]\cap \mathbb{Z}[q^{\pm\frac{1}{2}}][X_{m+1}^{\pm 1},X_{m+2}^{\pm 1}]
=\mathbb{Z}[q^{\pm\frac{1}{2}}][X_{m},X_{m+1}^{\pm 1},X_{m+2}]$;
\item[(2)] $\mathbb{Z}[q^{\pm\frac{1}{2}}][X_{m},X_{m+1}^{\pm 1}]\cap \mathbb{Z}[q^{\pm\frac{1}{2}}][X_{m-1},X_{m}^{\pm 1},X_{m+1}]
=\mathbb{Z}[q^{\pm\frac{1}{2}}][X_{m-1},X_{m},X_{m+1}]$;
 \item[(3)] $\mathbb{Z}[q^{\pm\frac{1}{2}}][X_{m},X_{m+1}^{\pm 1},X_{m+2}]=\mathbb{Z}[q^{\pm\frac{1}{2}}][X_{m-1},X_{m},X_{m+1},X_{m+2}]
+\mathbb{Z}[q^{\pm\frac{1}{2}}][X_{m},X_{m+1}^{\pm 1}]$;
 \item[(4)] $\mathbb{Z}[q^{\pm\frac{1}{2}}][X_{m},X_{m+1}^{\pm 1},X_{m+2}]\cap \mathbb{Z}[q^{\pm\frac{1}{2}}][X_{m-1},X_{m}^{\pm 1},X_{m+1}]
=\mathbb{Z}[q^{\pm\frac{1}{2}}][X_{m-1},X_{m},X_{m+1},X_{m+2}]$.
\end{enumerate} Here we omit the details of the proofs of these four equations.

It follows that  \begin{align*}
   & \bigcap_{k=m-1}^{m+1}\mathbb{Z}[q^{\pm\frac{1}{2}}][X_{k}^{\pm 1},X_{k+1}^{\pm 1}]\\
    =&\mathbb{Z}[q^{\pm\frac{1}{2}}][X_{m-1}^{\pm 1},X_{m}^{\pm 1}]\cap \mathbb{Z}[q^{\pm\frac{1}{2}}][X_{m}^{\pm 1},X_{m+1}^{\pm 1}]
            \cap \mathbb{Z}[q^{\pm\frac{1}{2}}][X_{m}^{\pm 1},X_{m+1}^{\pm 1}]\cap \mathbb{Z}[q^{\pm\frac{1}{2}}][X_{m+1}^{\pm 1},X_{m+2}^{\pm 1}]\\
           =&\mathbb{Z}[q^{\pm\frac{1}{2}}][X_{m-1},X_{m}^{\pm 1},X_{m+1}]\cap \mathbb{Z}[q^{\pm\frac{1}{2}}][X_{m},X_{m+1}^{\pm 1},X_{m+2}]\\
           =&\mathbb{Z}[q^{\pm\frac{1}{2}}][X_{m-1},X_{m},X_{m+1},X_{m+2}]\\
           =&\mathcal{A}_{q}(P_1,P_2).
   \end{align*}
This completes the proof of the theorem.
\end{proof}



\begin{thebibliography}{99}

\bibitem{bfz}
A.~Berenstein, S.~Fomin, and A.~Zelevinsky, \emph{Cluster algebras III: Upper bounds and double Bruhat cells,} Duke Math. J. 126 (2005), no. 1, 1--52.


\bibitem{berzel}
A.~Berenstein and A.~Zelevinsky, \emph{Quantum cluster algebras,}
Adv. Math. \textbf{195} (2005), 405--455.



\bibitem{CS}
L.~Chekhov and M.~Shapiro, \emph{Teichm¡§uller spaces of Riemann surfaces with orbifold
points of arbitrary order and cluster variables,} Int. Math. Res. Notices 2014 (2014),
2746--2772.



\bibitem{ca1}
S.~Fomin and A.~Zelevinsky, \emph{Cluster algebras. I. Foundations,}
J. Amer. Math. Soc.  \textbf{15}  (2002),  no. 2, 497--529.

\bibitem{ca2}
S.~Fomin and A.~Zelevinsky, \emph{Cluster algebras. II. Finite type
classification},  Invent. Math.  \textbf{154}  (2003),  no. 1,
63--121.

\bibitem{fg1}
V. V. Fock and A. B. Goncharov, \emph{Cluster ensembles, quantization and the dilogarithm,}
Annales Sci. \'{E}c. Norm. Sup\'{e}r. 42 (2009), 865--930.

\bibitem{fg2}
V. V. Fock and A. B. Goncharov, \emph{The quantum dilogarithm and representations of quantum cluster varieties,}
Invent. Math. 172 (2009), 223--286.

\bibitem{gsv16}
M. Gekhtman, M. Shapiro and A. Vainshtein, \emph{Drinfeld double of $GL_n$ and generalized cluster structures,}
2016, arXiv:1605.05705 [math.QA].

\bibitem{gleitz14}
A. Gleitz, \emph{Quantum affine algebras at roots of unity and generalised cluster algebras,}
2014, arXiv:1410.2446 [math.RT].

\bibitem{nak0}
T.~Nakanishi, \emph{Quantum generalized cluster algebras and quantum dilogarithms
of higher degrees,} Theor. Math. Phys. 185 (2015), 1759--1768.

\bibitem{nak}
T.~Nakanishi, \emph{Structure of seeds in generalized cluster algebras,}  Pacific J. Math. 277 (2015),
201--218.

\bibitem{rupel}
D.~Rupel, \emph{Greedy bases in rank 2 generalized cluster algebras,} 2013, arXiv:1309.2567 [math.RA].

\bibitem{usn}
A.~Usnich, \emph{Non-commutative Laurent phenomenon for two variables,} 2010, arXiv:1006.1211 [math.AG].


\end{thebibliography}
\end{document}